\newcommand{\mbN}{{\mathbb{N}}}
\newcommand{\mbZ}{{\mathbb{Z}}}
\newcommand{\mbR}{{\mathbb{R}}}
\newcommand{\mcF}{{\mathcal{F}}}
\newcommand{\mcL}{{\mathcal{L}}}
\newcommand{\mcN}{{\mathcal{N}}}
\newcommand{\mcU}{{\mathcal{U}}}
\newcommand{\cov}{{\mathrm{cov}}}
\newcommand{\var}{{\mathrm{var}}}
\newcommand{\Esp}{{\mathrm{E}}}
\newcommand{\Prob}{{\mathrm{P}}}
\newcommand{\defin}{:=}
\newcommand{\what}{\widehat}
\newcommand{\wtilde}{\widetilde}
\newcommand{\ds}{\displaystyle}
\newtheorem{lemma}{Lemma}
\newtheorem{proposition}{Proposition}
\newtheorem{theorem}{Theorem}
\newenvironment{proof}[1][Proof]{\bigskip\noindent\textbf{#1\, }}{\\}
\begin{document}
 

\noindent
\textbf{\huge  Limiting distributions}

\smallskip
\noindent
\textbf{\huge  for  explosive PAR(1) time series}

\smallskip
\noindent
\textbf{\huge   with strongly mixing 
innovation}

\bigskip
\noindent

\bigskip
\noindent
\textbf{\Large Dominique Dehay}
\footnote{D. Dehay\\
Institut de Recherche Math\'ematique de Rennes,  CNRS umr 6625, 
Universit\'e  de Rennes, France\\
e-mail : 
dominique.dehay@uhb.fr} 

\vspace{10mm} 

\bigskip
\noindent
\textbf{Abstract : }
This work deals with the limiting distribution of the least squares estimators of the coefficients $a_r$ of an explosive periodic autoregressive of order 1 (PAR(1)) time series $X_r=a_rX_{r-1}+u_r$ when the innovation $\{u_k\}$ is   strongly mixing. More precisely
 $\{a_r\}$ is a periodic sequence of real numbers with period $P>0$ and such that $\prod_{r=1}^P|a_r|>1$. The time series $\{u_r\}$ is  periodically distributed with the same period $P$ and  satisfies the strong mixing property, so the random variables $u_r$ can be correlated.
 
\bigskip
\noindent
\textbf{Keywords:} Parameter estimation; Explosive autoregressive time series; Periodic models; Strong mixing.


\bigskip
\noindent
\textbf{M.S.C 2010:} 62M10, 62M09.


\vspace{10mm}
  
\section{Introduction}
Many man-made  signals and data, even natural ones, exhibit periodicities.  
The non-stationary and seasonal behavior is quite common for many random phenomena as rotating machinery in mechanics (see Antoni 2009), 
seasonal data in econometrics and climatology,  but also signals in communication theory,  biology to name  a few (see e.g. Bloomfield et al. 1994; Chaari et al. 2014; Collet and Martinez 2008; Dragan et al. 1982; Franses and Paap 2004; Gardner et al. 2006; Serpedin et al. 2005; and references therein). 
Periodic autoregressive (PAR) models are one of the simplest linear models with a periodic structure. After more than fifty years of study these models and their generalizations (periodic  ARMA (PARMA), etc.) remain a subject of investigations of great interest as they can  be applied in modeling periodic phenomena for which seasonal ARIMA models do not fit adequatly (see e.g. Adams and Goodwin 1995; Bittanti and Colaneri 2009; Francq et al. 2011; Osborn et al. 1988).

It is well known that such linear models can be represented as   vectorial autoregressive (VAR) models. However the general results known for VAR models do not take into account the whole  periodic structure of the PARMA models (Basawa and Lund 2001; Tia and Grupe 1980) in particular the fact that the innovation can be  periodically distributed. Thus  specific methods have been developped for PAR and PARMA models.

There is a large amount of publications on the estimation problem for the coefficients of PARMA models essentially whenever the model is stable, that is periodic stationary also called cyclostationary (see e.g. Adams and Goodwin 1995; Aknouche and Bibi 2009; Basawa and Lund 2001; Francq et al. 2011; Pagano 1978; Tiao and Grupe 1980; Troutman 1979; Vecchia 1985). 
The unstable case has  been also studied when some autoregressive coefficients are in the boundary of the periodic stationary domain  (see e.g. Aknouche 2012a, 2012b; Aknouche and  Al-Eid 2012; Boswijk and Franses 1995, 1996; Ghyshels and Osborn 2001 and references therein). 

There are few results concerning explosive PAR model. Aknouche (2013) studies the case of  explosive PAR models driven by a periodically distributed independent innovation. However the independence of the innovation is too stringent in practice (see e.g. Aknouche and Bibi 2009; Francq et al. 2011 and references therein). 

In this work we relax the independence condition, and for simplicity of presentation, we consider  periodic autoregressive of order 1 time series that is PAR(1) models. To state the convergence in distribution of the estimators (Theorems~\ref{thrm:conv-loi-|phi|>1} and~\ref{thrm:conv-phi-|phi|>1}) we impose that the periodically distributed innovation is strongly mixing (see e.g. Bradley 2005). Thus it can be correlated and it satisfies some asymptotic independence between its past and its future (see condition (M) in Section~\ref{sect:LSE}). However there is no condition on the rate of the asymptotic independence.  This is
similar to what Phillips (1987) showed for the autoregressive time series with a unit root and constant coefficients.

Here we study the least squares estimators (LSE) of the PAR(1) coefficients, and the limiting distributions stated in Theorem~\ref{thrm:conv-loi-|phi|>1} below generalize the results obtained  by Monsour and Mikulski (1998) for explosive AR models with independent Gaussian innovation (see also Anderson 1959; Stigum 1974) and by  Aknouche (2013) for explosive PAR models with independent innovation. The rate of convergence of the estimators depends on the product  of the periodic PAR(1) coefficients of the PAR(1) model (Theorem~\ref{thrm:conv-loi-|phi|>1}). Actually this product  determines whether the model is stable, unstable or explosive. Thus it is subject of great interest and we  takle the problem of its estimation. For this purpose we consider two estimators : the product of the LSE of the PAR(1) coefficients 
(see (Aknouche 2013) for independent innovation), 
and a least squares estimator (Theorem~\ref{thrm:conv-phi-|phi|>1}). By simulation we detect no specific distinction between these estimators for the explosive PAR(1) models. The theoretical comparison of the limiting distributions of the two estimators is out of the scope of the paper and will be subject to another work.

The paper is organized as follows.
In Section~\ref{sect:background} the model under study is defined as well as the notations. Then the asymptotic behaviour as $n\to\infty$ of the scaled vector-valued time series $\big(\phi^{-n} X_{nP+r}:r=1,\dots,P\big)$ is stated in Section~\ref{sect:asympt-behav} where $\phi$ is the product of the periodic coefficients $a_r$ of the explosive PAR(1) model, thus $|\phi|>1$. The period is $P>0$.  Section~\ref{sect:LSE} deals with the consistency of the least squares estimators $\what{a}_r$ of the PAR coefficients $a_r$, $r=1,\dots,P$, as well as the limiting distributions of the  scaled errors $\phi^n(\what{a}_r-a_r)$, as $n\to\infty$. Next in Section~\ref{sect:estim-phi} we consider the problem of estimation of the product $\phi$. The asymptotic behaviour of the estimators introduced in this paper are illustrated by simulation in Section~\ref{sect:simul}. 
For an easier reading and understanding of the statements of the paper, the proofs of the results are presented in Appendix.

\section{Background~: PAR(1) time series}\label{sect:background}
Consider the following  PAR(1) model 
\begin{equation}\label{model:PAR(1)}
X_k=a_kX_{k-1}+u_k,\qquad k=1,2,\dots,
\end{equation}
where $\{a_k\}$ is a periodic sequence of real numbers and $\{u_k\}$ is a real-valued periodically distributed sequence of centered random variables defined on some underlying probability space  $(\Omega,\mcF,\Prob)$. We assume that the periods of $\{a_k\}$ and of $\{u_k\}$ have the same value $P>0$. Thus $a_{P+r}=a_r$ and $\mcL[u_{P+s},\dots,u_{P+r}]=\mcL[u_s,\dots,u_r]$ for all integers $s$ and $r$. 
To be short, in the sequel the sequence $\{\epsilon_k\}$ is called \emph{innovation} of the model although it is not necessarily uncorrelated.
Denote
$$A_s^{s-1}\defin 1,\qquad  A_s^r\defin\prod_{j=s}^ra_j \quad \mbox{for}\,\,\,1\leq s\leq r,\qquad \mbox{and}\qquad A_s^r\defin 0\quad\mbox{otherwise}$$
and let $\phi\defin A_1^P=\prod_{r=1}^Pa_r$. 
Since $\{a_k\}$ is periodic with period $P>0$, we have 
$A_{1}^{nP+r}=A_1^r\phi^n$ and we obtain the decomposition
\begin{equation}
X_{nP+r}
=A_{1}^{r}X_{nP}+U_n^{(r)}=A_1^r\phi^n\big(X_0+Z_{n-1}\big)+U_n^{(r)}\label{eq:XnP+r}
\end{equation}
where $$U_n^{(r)}\defin\sum_{s=1}^{r}A_{s+1}^{r} u_{nP+s},$$
$Z_{-1}\defin0$ and for $n\geq 1$
\begin{equation*}
Z_n\defin\sum_{l=0}^{n}\phi^{-l-1}\sum_{s=1}^{P}A_{s+1}^{P} u_{lP+s}=\sum_{l=0}^{n}\phi^{-l-1}U_l^{(P)}.
\end{equation*}
Note that the sequence $\big\{\big(U_n^{(1)},\dots,U_n^{(P)}\big):n\in\mbZ\big\}$ is strictly stationary (stationarily distributed).

If $|\phi|<1$, the model is stable and $X_{nP+r}$ converges in distribution to some random variable $\zeta^{(r)}$ as $n\to\infty$. If $|\phi|=1$, the model is unstable : its  behaviour is similar to a random walk; indeed for each $r$, the time series $\{X_{nP+r}\}$ is a random walk when the innovation $\{u_k: k>nP+r\}$ is independent with respect to the random variable $X_{nP+r}$. \\

Henceforth  we assume that the time series $\{X_k\}$ satisfies the PAR(1) equation~(\ref{model:PAR(1)}) with  $|\phi|>1$. We also assume that the initial random variable $X_0$ is square integrable. Moreover 
the innovation $\{u_k\}$ is centered periodically distributed with second order moments.  Thus $\{u_k\}$ is periodically correlated (see Hurd et al. 2002) and we have 
$\Esp[U_n^{(r)}]=0$ as well as
$$\cov\left[U_{n_1}^{(r_1)},U_{n_2}^{(r_2)}\right]
=\sum_{s_1=1}^{r_1}\sum_{s_2=1}^{r_2}A_{s_1+1}^{r_1}A_{s_2+1}^{r_2}\cov\left[u_{s_1}\,,\,u_{(n_2-n_1)P+s_2}\right]
$$
for all integers $n$, $n_1\leq n_2$,  and $r,r_1,r_2=1,\dots,P$.
Denote $\sigma_r\defin\sqrt{\var[u_r]}$ and $K_n^{(r)}\defin
\cov\left[U_{l}^{(r)},U_{l+n}^{(r)}\right]$.

\section{Explosive asymptotic behaviour of the model}\label{sect:asympt-behav}
In the forthcoming proposition we study the asymptotic behaviour of the time series $\{X_k\}$. 
Recall that we assume that $|\phi|>1$.
\begin{proposition}\label{prop:convX-|phi|>1} For any $r=1,\dots,P$
$$\lim_{n\to\infty}\phi^{-n}X_{nP+r}=A_1^r\left(X_0+\zeta\right)\qquad \mbox{a.s. and in q.m.}$$
where
$$\zeta\defin\lim_{n\to\infty}Z_n=\lim_{n\to\infty}\sum_{l=0}^{n}\phi^{-l-1}U_l^{(P)}\qquad\mbox{a.s. and in q.m.} $$
Moreover
$$\lim_{n\to\infty}\mcL\left[X_{nP+r}-\phi^{n}A_1^r\left(X_0+\zeta\right):r=1,\dots,P\right]=\mcL\left[U_0^{(r)}-A_1^r\zeta:r=1,\dots,P\right].$$
\end{proposition}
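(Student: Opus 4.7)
The plan is to establish the three claims in sequence, exploiting the explicit decomposition~(\ref{eq:XnP+r}) together with the strict stationarity of the vector sequence $\{(U_n^{(1)},\dots,U_n^{(P)}): n\in\mbZ\}$.

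First I would prove that $\zeta\defin\lim_n Z_n$ exists both a.s.\ and in quadratic mean. Because each $U_l^{(P)}$ is a finite linear combination of $u_{lP+1},\dots,u_{lP+P}$ and the joint distribution of these $P$ variables does not depend on $l$, strict stationarity gives $\|U_l^{(P)}\|_{L^2}=\|U_0^{(P)}\|_{L^2}$, and in particular $\Esp|U_l^{(P)}|\leq C$ for a constant $C$ independent of $l$. Since $|\phi|>1$, the geometric series $\sum_{l\geq 0}|\phi|^{-l-1}$ converges, so by Minkowski the partial sums $Z_n$ are Cauchy in $L^2$, giving q.m.\ convergence; and by monotone convergence $\sum_{l\geq 0}|\phi^{-l-1}U_l^{(P)}|<\infty$ almost surely, which yields the a.s.\ limit as well.

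Second, dividing~(\ref{eq:XnP+r}) by $\phi^n$ gives
$$\phi^{-n}X_{nP+r} = A_1^r(X_0+Z_{n-1}) + \phi^{-n}U_n^{(r)}.$$
The first summand converges to $A_1^r(X_0+\zeta)$ by the previous step, while $\Esp\bigl[(\phi^{-n}U_n^{(r)})^2\bigr]=|\phi|^{-2n}\Esp[(U_0^{(r)})^2]$ is summable in $n$. Markov's inequality and Borel--Cantelli therefore yield $\phi^{-n}U_n^{(r)}\to 0$ almost surely, and q.m.\ convergence is immediate.

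Third, for the joint distributional statement I would work from the identity
$$X_{nP+r}-\phi^nA_1^r(X_0+\zeta)=A_1^r\phi^n(Z_{n-1}-\zeta)+U_n^{(r)}=U_n^{(r)}-A_1^r\sum_{k=0}^\infty\phi^{-k-1}U_{n+k}^{(P)},$$
obtained by writing $Z_{n-1}-\zeta=-\sum_{l\geq n}\phi^{-l-1}U_l^{(P)}$ and re-indexing with $k=l-n$. The key observation is that $X_0$ cancels algebraically, so the right-hand side is a measurable function of the innovation alone. Strict stationarity of $\{(U_l^{(1)},\dots,U_l^{(P)}):l\in\mbZ\}$ then forces the joint distribution of the vector $\bigl(U_n^{(r)}-A_1^r\sum_{k\geq 0}\phi^{-k-1}U_{n+k}^{(P)}:r=1,\dots,P\bigr)$ to coincide with that of $\bigl(U_0^{(r)}-A_1^r\zeta:r=1,\dots,P\bigr)$ for every $n$; the purported limit is in fact attained identically. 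The one delicate point is justifying this distributional equality under the infinite summation, which I would handle by writing the sum as the a.s.\ limit of its finite partial sums (for which stationarity directly gives equality of joint laws) and invoking preservation of joint distribution under a.s.\ convergence of random vectors. Beyond that, the argument is routine manipulation of~(\ref{eq:XnP+r}).
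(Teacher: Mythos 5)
Your proposal is correct and follows essentially the same route as the paper: the same decomposition $\phi^{-n}X_{nP+r}=A_1^r(X_0+Z_{n-1})+\phi^{-n}U_n^{(r)}$, the same exact identity $X_{nP+r}-\phi^nA_1^r(X_0+\zeta)=U_n^{(r)}-A_1^r\sum_{k\geq0}\phi^{-k-1}U_{n+k}^{(P)}$, and the same appeal to strict stationarity of $\{(U_n^{(1)},\dots,U_n^{(P)})\}$ to identify the limiting law (which, as you note, is attained for every $n$). The only difference is cosmetic: you obtain the a.s.\ and q.m.\ convergence of $Z_n$ via Minkowski and monotone convergence of $\sum_l|\phi|^{-l-1}|U_l^{(P)}|$, whereas the paper bounds the variance of the tail sums and uses Borel--Cantelli; both are valid.
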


\noindent
\textbf{Remarks} 

\smallskip\noindent
1) Assume that $\Pr[X_0+\zeta\neq 0]\neq 0$,  then conditionally that $X_0+\zeta\neq 0$, the sequence $\{|X_{k}|\}$ converges to infinity  almost surely as $k\to\infty$. Thus conditionally that $X_0+\zeta\neq 0$, the paths of the time series $\{X_{k}\}$ are  explosive.

\smallskip\noindent
2) When $X_0=-\zeta$\, almost surely,  the time series $\{X_k\}$ which follows  the PAR(1) model with $|\phi|>1$, is  periodically distributed and satisfies the   following stable PAR(1) equation 
$$X_k=a_k^{-1}X_{k-1}+u_k^*$$
where $\{u_k^*\}$ is some periodically distributed time series. 
More precisely $$u_{nP+r}^*=A_1^{r-1}(a_r-a_r^{-1})\sum_{l=0}^{\infty}\phi^{-l-1} U_{n+l}^{(P)}+U_n^{(r-1)}+u_{nP+r}.$$
The estimation problem of the coefficients of such a PAR equation is now well-known, see e.g. (Acknouche and Bibi 2009; Basawa and Lund 2001; Francq et al. 2011  and references therein).
 
\smallskip\noindent
3) To state the convergence in quadratic mean in Proposition~\ref{prop:convX-|phi|>1} we can easily replace the assumption that the innovation $\{u_k\}$ is periodically distributed by the less stringent one that the innovation is periodically correlated.

\smallskip\noindent
4) When the  innovation $\{u_k\}$ is uncorrelated and periodically distributed we obtain
that
$$
\Esp\left[\left(Z_n-\zeta\right)^2\right]=\frac{\phi^{-2n}K_0^{(P)}}{\phi^{2}-1}
\qquad\mbox{and}\qquad \var\left[\zeta\right]=\frac{K_0^{(P)}}{\phi^2-1},$$
with $K_0^{(P)}=\sum_{s=1}^P(A_{s+1}^P)^2\sigma_s$.

\section{Least squares  estimation of the coefficients}\label{sect:LSE}
Now we deal with the estimation problem of the coefficients
$a_r$, $r=1,\dots, P,$ from the observation  $X_k$, $k=0,\dots, nP$, as $n\to\infty$.
%
For that purpose we determine the periodic sequence  $\{b_k\}$ that minimizes the sum of the squared errors
$$\sum_{k=1}^{nP}\big(X_k-b_kX_{k-1}\big)^2=\sum_{r=1}^P\sum_{j=0}^{n-1}\big(X_{jP+r}-b_{r}X_{jP+r-1}\big)^2.$$
Since
$X_{jP+r}=a_{r}X_{jP+r-1}+u_{jP+r}$, for $j=0,\dots,n-1$, and $r=1,\dots,P$,
the least squares estimator (LSE) of $a_r$ is defined by
$$\what{a}_r\defin \frac{\ds\sum_{j=0}^{n-1}X_{jP+r-1}X_{jP+r}}{\ds\sum_{j=0}^{n-1}(X_{jP+r-1})^2}=a_r+\frac{\ds\sum_{j=0}^{n-1}X_{jP+r-1}\,u_{jP+r}}{\ds\sum_{j=0}^{n-1}(X_{jP+r-1})^2}=a_r+\frac{C_n^{(r)}}{B_n^{(r)}},$$
where from expression~(\ref{eq:XnP+r}) we can write
\begin{equation}\label{def:Bnr}
B_n^{(r)}\defin\sum_{j=0}^{n-1}\big(X_{jP+r-1}\big)^2
=\sum_{j=0}^{n-1}\big(A_1^{r-1}\phi^j(X_0+Z_{j-1})+U_j^{(r-1)}\big)^2 
\end{equation}
and
\begin{equation}\label{def:Cnr}
C_n^{(r)}\defin\sum_{j=0}^{n-1}X_{jP+r-1}u_{jP+r}
=A_1^{r-1}\sum_{j=0}^{n-1}\phi^j(X_0+Z_{j-1})u_{jP+r}+\sum_{j=0}^{n-1}U_j^{(r-1)}u_{jP+r}.
\end{equation}
Here $U_j^{(0)}\defin0$.

Note that under  Gaussian and independence assumptions on the periodically distributed innovation $\{u_k\}$, the LSE $\what{a}_r$ coincides with the maximum likelihood estimator of $a_r$.

\bigskip
To prove the convergence in distribution of the scaled errors in the following results we use the next strong mixing condition. The notion of strong mixing, also called $\alpha$-mixing, was introduced by Rosenblatt (1956) and it is largely used for modeling the asymptotic independence in time series. The condition could be weakened using the notion of weak dependence, but this is out of the scope of the paper. For more information about mixing time series and weak dependence see e.g. (Bradley 2005;  Dedeker et al. 2007; Doukhan 1994 and references therein).
  
\medskip\noindent
(\textbf{M})\qquad\emph{ $\ds \lim_{n\to\infty}\alpha(n)=0$\qquad
where $\alpha(n)=\sup\big|\Prob[A\cap B]-\Prob[A]\Prob[B]\big|,$
the supremum being taken over all $k\in\mbN$ and all sets $A\in\mcF^k$, and $B\in\mcF_{k+n}$. Here the $\sigma$-fields $\mcF^k$ and $\mcF_{k+n}$ are defined by $\mcF^k\defin\mcF(X_0,u_j:j\leq k)$ and $\mcF_{k+n}\defin\mcF(u_{j}:j\geq k+n)$.}

\bigskip
Furthermore, in the following we assume that the underlying probability space $(\Omega,\mcF,\Prob)$ is sufficiently large so that there is a sequence of real valued random variables $\{u^*_k\}$ which is independent with respect to $X_0$ and the innovation  $\{u_k\}$, and such that $\mcL\left[u^*_0,\dots, u^*_{nP-1}\right]=\mcL\left[u_{nP},\dots,u_1\right]$ for any integer $n>1$. 
This is always possible at least by enlarging the probability space. 


\bigskip
Now we state the strong consistency of the LSE $\what{a}_r$ of $a_r$, as well as the asymptotic limiting distribution of the scaled error $\phi^n\big(\what{a}_r-a_r\big)$ for the explosive PAR(1)~model~(\ref{model:PAR(1)}).

\begin{theorem}\label{thrm:conv-loi-|phi|>1}
Conditionally that $X_0+\zeta\neq 0$, the least squares estimator $\what{a}_r$ converges to $a_r$ almost surely as $n\to\infty$, for $r=1,\dots,P$.
Furthermore assume that $\Prob[X_0+\zeta=0]=0$ and  the mixing condition~{\rm(\textbf{M})} is fulfilled, then the random vector of the scaled errors $\left\{\phi^{n}\big(\what{a}_r-a_r\big):r=1,\dots,P\right\}$ converges in distribution to $$\left\{\frac{(\phi^2-1)\zeta_r^*}{A_1^{r-1}(X_0+\zeta)}:r=1,\dots,P\right\}$$
 as $n\to\infty$. The random variable $\zeta$ is defined in Proposition~\ref{prop:convX-|phi|>1}, the random vector $\big(\zeta_1^*,\dots,\zeta_P^*\big)$ is independent with respect to $(X_0,\zeta)$,
and its distribution is defined by
$$\mcL\left[\zeta_r^*:r=1\dots,P\right]
=\mcL\left[\sum_{j=1}^{\infty}\phi^{-j}u_{jP-r}^*:r=1,\dots,P\right]$$
where the sequence $\{u_k^*\}$ is independent with respect to  $X_0$ and $\{u_k\}$, and such that  $\mcL\left[u^*_0,\dots, u^*_{nP-1}\right]=\mcL\left[u_{nP},\dots,u_1\right]$ for any integer $n>1$.
\end{theorem}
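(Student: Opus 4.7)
The plan is to write, for each $r=1,\dots,P$,
\begin{equation*}
\phi^{n}\bigl(\what{a}_r-a_r\bigr)=\frac{\phi^{-n}C_n^{(r)}}{\phi^{-2n}B_n^{(r)}}
\end{equation*}
and to treat numerator and denominator separately, systematically using Proposition~\ref{prop:convX-|phi|>1} to replace $Z_{j-1}$ by $\zeta$ wherever the geometric weights concentrate near $j=n$. Expanding~(\ref{def:Bnr}), the dominant term of $B_n^{(r)}$ is $(A_1^{r-1})^2\sum_{j=0}^{n-1}\phi^{2j}(X_0+Z_{j-1})^2$, whereas the cross term and the $(U_j^{(r-1)})^2$-term contribute only $O(|\phi|^n)$ and $O(n)$ a.s.; a C\'esaro-type argument on the geometric weights, combined with the a.s.\ limit $Z_{j-1}\to\zeta$, then yields
\begin{equation*}
\phi^{-2n}B_n^{(r)}\longrightarrow\frac{(A_1^{r-1})^2(X_0+\zeta)^2}{\phi^2-1}\quad\mbox{a.s.}
\end{equation*}
For strong consistency I would bound $|C_n^{(r)}|$ by $|\phi|^n$ times a random factor with only sub-exponential growth (a Borel--Cantelli estimate on the periodically distributed $\{u_k\}$ gives $|u_k|=o(|\phi|^{\epsilon k})$ a.s.), so that $C_n^{(r)}/B_n^{(r)}\to 0$ a.s.\ conditionally on $\{X_0+\zeta\neq 0\}$.

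For the limiting distribution, reindex the main sum in~(\ref{def:Cnr}) by $k=n-j$ and write
\begin{equation*}
\phi^{-n}C_n^{(r)}=A_1^{r-1}(X_0+\zeta)\,S_n^{(r)}+R_n^{(r)},\qquad S_n^{(r)}\defin\sum_{k=1}^{n}\phi^{-k}u_{(n-k)P+r},
\end{equation*}
where $R_n^{(r)}$ collects the error from replacing $X_0+Z_{j-1}$ by $X_0+\zeta$ (small since $Z_{j-1}-\zeta=O(\phi^{-j})$ in q.m.) and the piece $\phi^{-n}\sum_{j=0}^{n-1}U_j^{(r-1)}u_{jP+r}$, whose variance grows at most linearly in $n$ and is crushed by $\phi^{-n}$; hence $R_n^{(r)}\to 0$ in probability. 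The problem thus reduces to identifying the joint limit in law of $(X_0,\zeta,S_n^{(1)},\dots,S_n^{(P)})$ and then applying the continuous mapping theorem (licit because $\Prob[X_0+\zeta=0]=0$).

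For this joint limit I would use a double truncation. The identity $\mcL[u_0^*,\dots,u_{nP-1}^*]=\mcL[u_{nP},\dots,u_1]$ together with the periodicity of the distribution of $\{u_k\}$ gives $\bigl(S_n^{(r)}\bigr)_r\stackrel{d}{=}\bigl(\sum_{k=1}^{n}\phi^{-k}u_{kP-r}^*\bigr)_r$, which converges in q.m.\ to $(\zeta_1^*,\dots,\zeta_P^*)$, the tail past level $M$ having $L^2$-norm of order $|\phi|^{-M}$. To keep $(X_0,\zeta)$ locked in the joint limit I would invoke the mixing condition~(\textbf{M}): for fixed $M,N$, set $\zeta_N\defin\sum_{l=0}^{N}\phi^{-l-1}U_l^{(P)}$ and $S_{n,M}^{(r)}\defin\sum_{k=1}^{M}\phi^{-k}u_{(n-k)P+r}$; then $(X_0,\zeta_N)$ is $\mcF^{(N+1)P}$-measurable and $\bigl(S_{n,M}^{(r)}\bigr)_r$ is $\mcF_{(n-M)P+1}$-measurable, so $\alpha\bigl((n-M-N-1)P+1\bigr)\to 0$ yields their asymptotic independence. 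Letting first $n\to\infty$, then $N\to\infty$ (using $\zeta_N\to\zeta$ in q.m.), and finally $M\to\infty$ produces the announced joint law and completes the proof.

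The hard part is this last step: one must commute three limits (in $n$, then $N$, then $M$) involving the same innovation sequence that feeds both $\zeta$ and $S_n^{(r)}$, without any quantitative rate on the mixing coefficient $\alpha(\cdot)$; and the whole bookkeeping must be carried out jointly in $r=1,\dots,P$ while keeping the non-degenerate factor $(X_0,\zeta)$ in the limit.
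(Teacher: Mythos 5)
Your proposal is correct, but at the decisive step it takes a genuinely different route from the paper. For the denominator and the almost sure consistency you essentially reproduce the paper's Lemmas (a Toeplitz/C\'esaro argument for $\phi^{-2n}B_n^{(r)}$ and crude Borel--Cantelli bounds for the negligible terms), which is fine. For the limit law, you replace $X_0+Z_{j-1}$ by $X_0+\zeta$ inside $C_n^{(r)}$ (the error is indeed $O(n|\phi|^{-n})$ in $L^1$) and reduce everything to the joint weak limit of the \emph{fixed} pair $(X_0,\zeta)$ together with $S_n^{(r)}=\sum_{k=1}^{n}\phi^{-k}u_{(n-k)P+r}$, handling the dependence between $\zeta$ and the recent innovations by a double truncation ($\zeta_N$ and $S_{n,M}^{(r)}$) and an iterated limit. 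The paper instead blocks inside $C_n^{(r)}$ itself: with $n_1=n/4$, $n_2=n/2$ it replaces $X_0+Z_{j-1}$ by $X_0+Z_{n_1}$ (measurable with respect to $\mcF^{(n_1+1)P}$) and keeps only the recent block $\sum_{j\geq n_2}\phi^{j-n}u_{jP+r}$, so a single application of (\textbf{M}) with a gap of order $nP/4$ gives the asymptotic independence in one limit $n\to\infty$; the same blocking is reused with the Cram\'er device to get the joint convergence of all the $B_n^{(r)}$ and $C_n^{(r)}$. Your route buys a cleaner continuous-mapping step, since the a.s.\ limit of $\phi^{-2n}B_n^{(r)}$ is a function of $(X_0,\zeta)$ already carried in your joint vector, so no second blocking/Cram\'er--Wold argument for the denominator is needed; the paper's route buys the absence of iterated limits and of approximation bookkeeping. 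The step you flag as hard does close without any rate on $\alpha(\cdot)$: the truncation errors are uniform in $n$ ($\Vert S_n^{(r)}-S_{n,M}^{(r)}\Vert_{L^2}\leq C|\phi|^{-M}$ and $\Vert\zeta-\zeta_N\Vert_{L^2}\leq C|\phi|^{-N}$), for fixed $M,N$ the separating gap $(n-M-N-1)P+1$ tends to infinity, the law of $\big(S_{n,M}^{(r)}:r\big)$ does not depend on $n$ by periodic distribution, so the covariance inequality for strongly mixing variables gives the product limit of the joint characteristic functions (jointly in $r$, since the mixing bound acts at the level of the $\sigma$-fields), and the standard converging-together theorem then lets you send $N\to\infty$ and $M\to\infty$.
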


Note that  in general the limiting distribution of $\phi^n(\what{a}_r-a_r)$ is not parameter free, that is, it depends on the parameters we are estimating.  
Under Gaussian assumption on the periodically distributed innovation $\{u_k\}$, the random variables $\zeta$ and $\zeta_r^*$ are Gaussian and independent, so the distribution of ratio $\zeta_r^*/\zeta$ is a  Cauchy distribution (see also Aknouche 2013). In fact, the limiting distributions in Theorem~\ref{thrm:conv-loi-|phi|>1} have heavy tails.

\section{Estimation of $\phi$}\label{sect:estim-phi}
To estimate $\phi$, the product of the coefficients $a_r$, $r=1,\dots,P$, we can consider the product of the estimators $\what{a}_r$:
$$\wtilde{\phi}=\prod_{r=1}^P\what{a}_r.$$
Then from Theorem~\ref{thrm:conv-loi-|phi|>1}, the estimator $\wtilde{\phi}$ converges almost surely to $\phi$  conditionally that $X_0+\zeta\neq0$. Thanks to the Delta-method (Theorem 3.1, Van der Vaart 1998) we readily deduce the asymptotic law of the normalized error $\phi^n\big(\wtilde{\phi}-\phi\big)$.
\begin{equation*}
\lim_{n\to\infty}\mcL\big[\phi^n\big(\wtilde{\phi}-\phi\big)\big]=\mcL\left[\frac{\phi^2-1}{X_0+\zeta}\times\sum_{r=1}^P A_{r+1}^P\zeta_r^*\right]
\end{equation*}
when we assume that $\Prob[X_0+\zeta=0]=0$ and  the mixing condition~{\rm(\textbf{M})} is fulfilled. See (Aknouche 2013) for independent innovation. 

\medskip
Besides, we can  define a least squares estimator of $\phi$. For that purpose, note that from  relation~(\ref{model:PAR(1)}) and the periodicity of the coefficients, we obtain  for all $j\in\mbN^*$ and $r=1,\dots,P$, that 
$X_{jP+r}=\phi X_{(j-1)P+r}+V_j^{(r)}$ where
$$V_j^{(r)}=\sum_{k=0}^{P-1}A_{r-k+1}^r u_{jP+r-k}.$$
Since the innovation $\{u_k\}$ is periodically distributed with the same period $P$, the sequence $\big\{\big(V_j^{(1)},\dots,V_j^{(P)}\big):j\in\mbZ\big\}$ is strictly stationary (stationarily distributed). Then minimizing  the sum of the squared errors
$$\sum_{k=P+1}^{nP}\big(X_k-b X_{k-P}\big)^2=\sum_{j=1}^{n-1}\sum_{r=1}^P\big(X_{jP+r}-b X_{(j-1)P+r}\big)^2,$$ we define the least squares estimator $\what{\phi}$ as
$$\what{\phi}\defin\frac{\ds\sum_{k=P+1}^{nP}X_kX_{k-P}}{\ds\sum_{k=P+1}^{nP}(X_{k-P})^2}
=\phi+\frac{C_n}{B_n}$$
where
$$C_n=\sum_{j=1}^{n-1}\sum_{r=1}^PX_{(j-1)P+r}V_j^{(r)}\quad
\mbox{and}\quad B_n=\sum_{k=P+1}^{nP}(X_{k-P})^2=\sum_{j=1}^{n-1}\sum_{r=1}^P\big(X_{(j-1)P+r}\big)^2.$$
Then following the same arguments as for the LSE $\what{a}_r$, 
we can state the forthcoming result.
\begin{theorem}\label{thrm:conv-phi-|phi|>1}
Conditionally that $X_0+\zeta\neq 0$, the least squares estimator $\what{\phi}$ converges to $\phi$ almost surely.
Assume that $\Prob[X_0+\zeta=0]=0$ and  the mixing condition~{\rm(\textbf{M})} is fulfilled, then the scaled error $\phi^{n}\big(\what{\phi}-\phi\big)$ converges in distribution 
\begin{equation}\label{lim:whatphi}
\lim_{n\to\infty}\mcL\big[\phi^{n}\big(\what{\phi}-\phi\big)\big]=
\mcL\left[\frac{(\phi^2-1)\zeta^*}{\sum_{r=1}^P\big(A_{r+1}^{P}\big)^{-2}\,(X_0+\zeta)}\right]
\end{equation}
conditionally that $X_0+\zeta\neq 0$, as $n\to\infty$.
Here $\zeta^*$ is a random variable independent with respect to $X_0$ and $\zeta$. The distribution of $\zeta^*$ coincides with the distribution of 
$$\sum_{r=1}^P\sum_{k=0}^{P-1}A_{r+1}^PA_{r-k+1}^r\sum_{j=1}^{\infty}\phi^{-j}u_{jP-r+k}^*.$$
\end{theorem}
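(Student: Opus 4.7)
The plan is to mimic the proof of Theorem~\ref{thrm:conv-loi-|phi|>1}, exploiting the fact that $\what\phi$ is the least squares estimator in the AR(1)-type regression $X_{jP+r}=\phi X_{(j-1)P+r}+V_j^{(r)}$, whose innovation $\{(V_j^{(1)},\dots,V_j^{(P)})\}_j$ is strictly stationary. Writing $\phi^n(\what\phi-\phi)=\phi^n C_n/B_n$, I would treat the denominator (almost sure scaling) and the numerator (distributional scaling) separately and conclude by Slutsky's theorem.

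For $B_n$, Proposition~\ref{prop:convX-|phi|>1} yields $\phi^{-(j-1)}X_{(j-1)P+r}\to A_1^r(X_0+\zeta)$ almost surely. A Toeplitz argument applied to
\[
\phi^{-2(n-1)}B_n\;=\;\sum_{r=1}^P\sum_{j=1}^{n-1}\phi^{-2(n-j)}\bigl(\phi^{-(j-1)}X_{(j-1)P+r}\bigr)^2,
\]
with summable geometric weights, produces the almost-sure positive limit $B_\infty=\frac{(X_0+\zeta)^2}{\phi^2-1}\sum_r(A_1^r)^2=\frac{\phi^2(X_0+\zeta)^2}{\phi^2-1}\sum_r(A_{r+1}^P)^{-2}$ on $\{X_0+\zeta\neq 0\}$. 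A parallel bound gives $C_n=O(\phi^{n-1})$ a.s., so $\what\phi\to\phi$ a.s., which is the strong consistency assertion. For the numerator, substituting $X_{(j-1)P+r}=A_1^r\phi^{j-1}(X_0+Z_{j-2})+U_{j-1}^{(r)}$ into $C_n$ and scaling by $\phi^{-(n-1)}$ leaves a negligible cross term $\phi^{-(n-1)}\sum_{j,r}U_{j-1}^{(r)}V_j^{(r)}=O(n\phi^{-(n-1)})$, together with a principal term which, after the change $l=n-j$, reads
\[
\phi^{-(n-1)}C_n^{\text{princ}}\;=\;\phi\sum_{r=1}^P A_1^r\sum_{l=1}^{n-1}\phi^{-l}\,(X_0+Z_{n-l-2})\,V_{n-l}^{(r)}.
\]

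The hard part is to identify the joint distributional limit of this expression: $X_0+Z_{n-l-2}$ depends on the entire past and tends a.s.\ to $X_0+\zeta$, while $V_{n-l}^{(r)}$ for $l$ in a bounded range depends on innovations of index close to $nP$, i.e.\ far in the future. I would use condition~(\textbf{M}) through a double truncation: approximate $\zeta$ by $Z_M$ for fixed $M$, so that $X_0+Z_M\in\mcF^{(M+1)P}$; truncate the $l$-sum at some fixed $L$, so that, for $n$ large, the block $(V_{n-1}^{(r)},\dots,V_{n-L}^{(r)})_r$ lies in $\mcF_{(M+1)P+\Delta_n}$ with $\Delta_n\to\infty$; then $\alpha(\Delta_n)\to 0$ and a Rosenblatt-type characteristic-function estimate yield asymptotic factorisation of the joint law. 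The limiting distribution of the $V$-block itself is identified via strict stationarity of $\{(V_j^{(r)})_r\}_j$ together with the identity $\mcL[u^*_0,\dots,u^*_{nP-1}]=\mcL[u_{nP},\dots,u_1]$, giving joint equality in law with $\bigl(\sum_{k=0}^{P-1}A_{r-k+1}^ru^*_{lP-r+k}\bigr)_{l,r}$. Letting $L,M\to\infty$, with the geometric tails $\phi^{-l}$ and $\Esp[(\zeta-Z_M)^2]=O(\phi^{-2M})$ controlling the remainders, closes the argument.

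Combining the two scalings by Slutsky's theorem and simplifying via $A_1^r=\phi/A_{r+1}^P$ produces the stated limit~(\ref{lim:whatphi}). As in Phillips (1987), the main obstacle above is exactly what forces condition~(\textbf{M}), yet no rate on $\alpha(n)$ is needed: the convergence $\alpha(n)\to 0$ alone is enough to drive the factorisation of the joint characteristic function in the truncated scheme.
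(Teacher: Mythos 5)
Your proposal follows essentially the same route as the paper's proof: the same regression $X_{jP+r}=\phi X_{(j-1)P+r}+V_j^{(r)}$, the same splitting of $C_n$ into a negligible cross term $\sum_{j,r}U_{j-1}^{(r)}V_j^{(r)}$ (of order $n$ in $L^1(\Prob)$) plus a principal term, the Toeplitz argument for $\phi^{-2n}B_n$ via Proposition~\ref{prop:convX-|phi|>1}, and the mixing-based separation of a past factor converging to $X_0+\zeta$ from a geometric sum of $V$'s whose law is identified through the reversed auxiliary sequence $\{u_k^*\}$; your fixed truncations $M,L$ followed by $L,M\to\infty$ are just an equivalent implementation of the paper's $n$-dependent blocks $n_1=n/4$, $n_2=n/2$ in Lemma~\ref{lemm:Cn}. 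One caveat: the concluding step should not be plain Slutsky (the a.s.\ limit of $\phi^{-2n}B_n$ is random, not constant) but joint convergence of the past factor and the future block followed by the continuous mapping theorem --- which is exactly what your characteristic-function factorisation (and the paper's Cram\'er--Wold step for Theorem~\ref{thrm:conv-loi-|phi|>1}) delivers, since the denominator's limit is a function of the same $X_0+\zeta$ appearing in the numerator.
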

In the next section we see by simulation that the distributions of $\tilde{\phi}$ and of $\what{\phi}$  seem to be similar when $|\phi|>1$. The theoretical comparison of these distributions is out of the scope of the paper.

\section{Simulation}\label{sect:simul}
Here we present the simulations of some explosive PAR(1) time series, and we illustrate the behaviour of the LSE $\what{a}_r$, $\what{\phi}$ and of $\tilde{\phi}$ for different values of the coefficients $a_r$, $r=1,\dots,P$ and for different types of innovation. 
For that purpose we consider the PAR(1) model~(\ref{model:PAR(1)}) with period $P=6$. The periodic coefficients $a_r$s are given in Table~\ref{tableau:coeff}.
\begin{table}[ht]\center
\caption{\footnotesize PAR(1) coefficients }
\label{tableau:coeff}
\footnotesize
\begin{tabular}{|r|cccccc|c|}
\hline
& $a_1$ & $a_2$ & $a_3$ & $a_4$ & $a_5$ & $a_6$ & $\phi$\\
\hline
family 1&0.8 & 1.2 & 1   & 1.5 & 1.1 & 0.9 & 1.4256\\
family 2&0.8 & 1.1 & 1   & 1.5 & 1.1 & 0.7 & 1.0164\\
family 3&0.5 & 1   & 1   & 2.5 & 1.6 & 0.5 & 1\\
family 4&0.5 & 1  & 1.5   & 1.62 & 1.6 & 0.5 & 0.972\\
\hline
\end{tabular}
\end{table}

\noindent
Thus we simulate the cases $|\phi|>1$, $|\phi|$ close to 1, $\phi=1$ and $|\phi|<1$. 
The innovation is defined by 
$$u_k=\cos\Big(\frac{\pi k}{3}\Big)\,v_k\qquad\mbox{where}\qquad
v_k=\frac{1}{\sqrt{m+1}}\sum_{i=0}^m\epsilon_{k+i},$$
the random variables $\epsilon_k$, $k\in\mbN$, are independent and identically distributed,  and $m\in\{0\,,\,2000\}$. 
When $m=0$, we have $v_k=\epsilon_k$, $k\in\mbN$, and the random variables $u_k$ are periodically distributed and independent.
When $m=2000$, the random variables $u_k$ are periodically distributed and correlated. Actually they are $m$-dependent, thus strongly mixing.
We consider two  distributions for the $\epsilon_k$s : the standard normal distribution $\mcN(0,1)$, and the uniform distribution $\mcU[-1000,1000]$. Hence in the last case the  distribution of the $\epsilon_k$s is spread out.

To sum up, for each family of coefficients, we obtain four PAR(1) time series that we simulate with different lengths $T=nP=6n$. 
In each case we  perform 100 replications. The algorithm of the simulation is implemented in 'R' software code. Below we present some of the tables and histograms that we obtain  to compare the results.

\begin{table}[ht] \center
\caption{\footnotesize $|\phi|=1.4256$ and uncorrelated Gaussian innovation :  $\mcL[\epsilon_k]=\mcN(0,1)$,    $m=0$,     $n=20$}
\label{tableau:1.4256-N-0-20}
\footnotesize
\begin{tabular}{|l|cccccc|cc|} 
\hline
 \textbf{parameter}&0.8& 1.2& 1& 1.5& 1.1& 0.9& 1.4256&\\
\hline
\textbf{estimate}& $\what{a}_1$ & $\what{a}_2$ & $\what{a}_3$ & $\what{a}_4$  & $\what{a}_5$ & $\what{a}_6$ &  $\what{\phi}$ & $\wtilde{\phi}$\\
mean & 0.8001 & 1.1999 & 0.9999 & 1.4999 & 1.0999 & 0.9000 & 1.4257 & 1.4256\\
median & 0.8000 & 1.1999 & 1.0000 & 1.4999 & 1.0999 & 0.9000 & 1.4256 &  1.4255 \\
\hline
\textbf{error}&&&&&&&&\\
mean& 2e-04 & -4e-05 & -2e-06 & -8e-05 & -1e-05 & 7e-06 & 2e-04 & 9e-05 \\
sigma&  2e-03 & 4e-04 &  8e-06 & 8e-04 & 9e-05 & 9e-05 & 3e-03 & 2e-03\\
\underline{boxplot}&&&&&&&&\\
u. whisker  & 8e-04 & 2e-04 &  8e-07 & 4e-04 &  6e-05 & 6e-05 &  9e-04 &  8e-04\\
u. hinge    & 3e-04 & 5e-05 & 3e-07 & 9e-05 &  2e-05 & 2e-05 & 3e-04 & 3e-14\\
l. hinge  & -2e-04 & -7e-05 & -4e-07 & -2e-04 & -2e-05 &  -2e-05 & -2e-04 & -2e-04\\
l. whisker & 8e-04 & -2e-04 & -9e-07 & -5e-04 & -6e-05 & -5e-05 & -7e-04 & -8e-04\\
\underline{percentile}&&&&&&&&\\
abs 0.95  & 4e-03 & 1e-03 & 7e-06 & 2e-03 & 3e-04 &  3e-04 &  4e-03 &  4e-03\\
\hline
\end{tabular}
\end{table}

\begin{table}[ht] \center
\caption{\footnotesize $|\phi|=1.0164$ and uncorrelated Gaussian innovation :   $\mcL[\epsilon_k]=\mcN(0,1)$,    $m=0$,     $n=200$}
\label{tableau:1.0164-N-0-200}
\footnotesize
\begin{tabular}{|l|cccccc|cc|} 
\hline
 \textbf{parameter}& 0.8 & 1.1 & 1 & 1.5 & 1.1 & 0.7 & 1.0164 &\\
\hline
\textbf{estimate} & $\what{a}_1$ & $\what{a}_2$ & $\what{a}_3$ & $\what{a}_4$  & $\what{a}_5$ & $\what{a}_6$ &  $\what{\phi}$ & $\wtilde{\phi}$\\
mean &   0.7965 & 1.0996 & 0.9999 & 1.4986 & 1.0999 & 0.7000 & 1.0131 &  1.0107 \\
median & 0.7994 & 1.0999 & 1.0000 & 1.4999 & 1.1000 & 0.6999 & 1.0158 &  1.0154 \\
\hline
\textbf{error}&&&&&&&&\\
mean & -4e-03 & -4e-04 & -3e-05 & -2e-03 & -4e-05 & 4e-05 & -4e-03 & -6e-03 \\
sigma & 1e-02 &  2e-03 &  2e-04 &  7e-03 & 5e-04 & 1e-03 & 9e-03 &  3e-02\\
\underline{boxplot}&&&&&&&&\\
u. whisker & 4e-03 &  4e-04 & 2e-05 & 2e-03 & 3e-04 & 9e-04 & 4e-03 &  3e-03\\
u. hinge & 2e-04 & 5e-05 &  5e-06 & 4e-04 & 8e-05 &  3e-04 &  2e-04 & -5e-05\\
l. hinge & -3e-03 & -3e-04 & -4e-06 &-1e-03 & -9e-05 & -3e-04 & -3e-03 & -2e-03\\
l. whisker & -5e-03 & -6e-04 & -2e-05 & -3e-03 & -4e-04 & -8e-04 & -5e-04 & -7e-03\\
\underline{percentile}&&&&&&&&\\
abs 0.95 & 3e-02 &  3e-03 & 4e-04 & 9e-03 & 7e-04 & 2e-03  & 3e-02  & 5e-02\\
\hline
\end{tabular}
\end{table}

\begin{table}[ht] \center
\caption{\footnotesize $|\phi|=1.0164$ and uncorrelated Gaussian innovation :   $\mcL[\epsilon_k]=\mcN(0,1)$,    $m=0$,     $n=400$}
\label{tableau:1.0164-N-0-400}
\footnotesize
\begin{tabular}{|l|cccccc|cc|} 
\hline
 \textbf{parameter}& 0.8 & 1.1 & 1 & 1.5 & 1.1 & 0.7 & 1.0164 & \\
\hline
\textbf{estimate}& $\what{a}_1$ & $\what{a}_2$ & $\what{a}_3$ & $\what{a}_4$  & $\what{a}_5$ & $\what{a}_6$ &  $\what{\phi}$ & $\wtilde{\phi}$\\
mean &   0.7999 & 1.1000 & 1.0000 & 1.5000 & 1.1000 & 0.6999 & 1.0163 & 1.0163 \\
median & 0.8000 & 1.0999 & 1.0000 & 1.4999 & 1.0999 & 0.7000 & 1.0164 & 1.0164\\
\hline
\textbf{error}&&&&&&&&\\
mean  & -8e-05 & 4e-07 & 4e-07 & 5e-06 & 2e-06 & -8e-06 & -7e-05 & -1e-04\\
sigma & 7e-04 & 5e-05 & 3e-06 & 3e-04 & 4e-05 & 1e-04 & 7e-04 & 9e-04\\
\underline{boxplot}&&&&&&&&\\
u. whisker & 2e-04 &  2e-05 &  2e-08 &  8e-05 &  2e-05 &  3e-05 & 2e-04 &  2e-04\\
u. hinge   & 5e-05 &   3e-06 &  5e-09 &  2e-05 &  2e-06 & 2e-05 & 5e-05 &  5e-05\\
l. hinge   & -4e-05 & -8e-06 & -6e-09 & -4e-05 & -4e-06 & -6e-06 & -4e-05 & -5e-05\\
l. whisker & -2e-04 & -2e-05 & -2e-08 & -7e-05 & -2e-05 & -3e-05 & -2e-04 & -2e-04\\
\underline{percentile}&&&&&&&&\\
abs 0.95 & 7e-04 &  8e-05  & 2e-06  & 4e-04 &  5e-05 &  2e-04 &  7e-04 & 8e-04\\
\hline
\end{tabular}
\end{table}

\begin{table}[ht] \center
\caption{\footnotesize $|\phi|=1.0164$ and correlated Gaussian innovation :   $\mcL[\epsilon_k]=\mcN(0,1)$,    $m=2000$,     $n=400$}
\label{tableau:1.0164-N-2000-400}
\footnotesize
\begin{tabular}{|l|cccccc|cc|} 
\hline
\textbf{parameter} & 0.8 & 1.1 & 1 & 1.5 & 1.1 & 0.7 & 1.0164 & \\
\hline
\textbf{estimate}& $\what{a}_1$ & $\what{a}_2$ & $\what{a}_3$ & $\what{a}_4$  & $\what{a}_5$ & $\what{a}_6$ &  $\what{\phi}$ & $\wtilde{\phi}$\\
mean &  0.7999 & 1.1000  &    1.0000 & 1.5000 & 1.1000 & 0.6999 & 1.0163 & 1.0163\\
median & 0.7999 & 1.1000  &    1.0000 & 1.5000 & 1.1000 & 0.6999 & 1.0163 &  1.0163\\
\hline
\textbf{error}&&&&&&&&\\
mean & -8e-05 & 9e-06 & -3e-10 & 5e-05 & 6e-06 & -2e-05 & -8e-05 & -8e-05\\
sigma & 7e-04 & 9e-05 & 2e-09 & 4e-04 & 5e-05 & 2e-04 & 7e-04  & 7e-04\\
\underline{boxplot}&&&&&&&&\\
u. whisker &  2e-04 & 3e-05 & 5e-12 & 2e-04 & 2e-05 &  5e-05 & 2e-04 & 2e-04\\
u. hinge &   5e-05  & 9e-06 & -2e-12 & 4e-05 & 6e-06  & 2e-05  & 5e-05  & 5e-05\\
l. hinge &  -7e-05 & -7e-06 & -2e-11 & -3e-05 & -4e-06 & -2e-05 & -7e-05 & -7e-05\\
l. whisker & -2e-04 & -3e-05 & -4e-11 & -2e-04 & -2e-05 & -5e-05 & -3e-04 & -3e-04\\
\underline{percentile}&&&&&&&&\\
abs 0.95 & 9e-04 &  2e-04 & 5e-10 & 5e-04 &  7e-05 & 2e-04 &  9e-04 & 9e-04\\
\hline
\end{tabular}
\end{table}

\begin{table}[ht]
\center
\caption{\footnotesize $|\phi|=1.0164$ and uncorrelated uniformly distributed innovation  : $\mcL[\epsilon_r]= \mcU[-1000,1000]$, $m=0$, $n=400$}
\label{tableau:1.0164-U-0-400}
\footnotesize
\begin{tabular}{|l|cccccc|cc|} 
\hline
 \textbf{parameter}& 0.8 & 1.1 & 1 & 1.5 & 1.1 & 0.7 & 1.0164 &\\
 \hline
\textbf{estimate}& $\what{a}_1$ & $\what{a}_2$ & $\what{a}_3$ & $\what{a}_4$  & $\what{a}_5$ & $\what{a}_6$ &  $\what{\phi}$ & $\wtilde{\phi}$\\
mean &  0.7991 & 1.0998 & 1.0000 & 1.4996 & 1.0999 & 0.7000 & 1.0156 &  1.0150\\
median & 0.7999 & 1.1000 & 1.0000 & 1.5000 	& 1.1000 & 0.7000 & 1.0163 &  1.0163\\
\hline
\textbf{error}&&&&&&&&\\
mean  & -9e-04 & -2e-04 & 1e-06 & -4e-04 & -2e-05 & 2e-05  & -8e-04 & -2e-03\\
sigma & 6e-03 & 8e-04 & 2e-05 &  3e-03 &  1e-04 & 2e-04 & 6e-03 &  1e-02\\
\underline{boxplot}&&&&&&&&\\
u. whisker & 2e-04 &  3e-05 &  4e-08 &  2e-04 &  2e-05 &  4e-05 & 2e-04 & 2e-04\\
u. hinge  &  4e-05 &  8e-06 & 1e-08 & 4e-05 &  5e-06 & 9e-06 & 4e-05 &    4e-05\\
l. hinge  & -8e-05 & -6e-06 & -4e-10 & -3e-05 & -4e-06 & -2e-05 & -8e-05 & -8e-05\\
l. whisker& -3e-04 & -3e-05 & -4e-05 & -2e-04 & -2e-05 & -4e-05 & -3e-04 & -3e-04\\
\underline{percentile}&&&&&&&&\\
abs 0.95 & 6e-04 &  1e-04 &  7e-06 & 4e-04 &  6e-05 &  2e-04 & 6e-04& 7e-04\\
\hline
\end{tabular}
\end{table}

\begin{table}[ht]
\center
\caption{\footnotesize $|\phi|=1$ and uncorrelated Gaussian innovation :  $\mcL[\epsilon_r]= \mcN(0,1)$, $m=0$, $n=400$}
\label{tableau:1-N-0-400}
\footnotesize
\begin{tabular}{|l|cccccc|cc|} 
\hline
 \textbf{parameter}& 0.5 & 1 & 1 & 2.5 & 1.6 & 0.5 & 1 &  \\
 \hline
\textbf{estimate}& $\what{a}_1$ & $\what{a}_2$ & $\what{a}_3$ & $\what{a}_4$  & $\what{a}_5$ & $\what{a}_6$ &  $\what{\phi}$ & $\wtilde{\phi}$\\
mean   & 0.4948 & 1.0000 &1.0000 & 2.4372 & 1.5972 & 0.5004 & 0.9944 &   0.9644\\
median & 0.4973 & 1.0000 &1.0000 & 2.4566 & 1.5978 & 0.5002 & 0.9972  & 0.9765\\
\hline
\textbf{error}&&&&&&&&\\
mean & -5e-03 &  -2e-06 & 1e-05 & -6e-02 & -2e-03 & 4e-03 & -6e-03 & 4e-02\\
sigma & 9e-03 & 2e-04 & 4e-04 & 6e-02 &  3e-03 & 6e-04 & 9e-03 & 4e-02\\
\underline{boxplot}&&&&&&&&\\
u. whisker & 4e-03 &  8e-05 &  4e-04 & -8e-03 & -4e-04 &  2e-03 &  5e-03 & 2e-03\\
u. hinge   & -4e-04 & 2e-05 &  8e-05 & -3e-02 & -2e-03 &  8e-04 & -5e-04 &  -2e-02\\
l. hinge   & -9e-03 & -2e-05 & -9e-05 & -8e-02 & -4e-03 &  4e-05 & -1e-02 &  -5e-02\\
l. whisker& -2e-02 & -8e-05 & -4e-04 & -2e-01	& -7e-03 & -1e-03 & -3e-02 &  -1e-01\\
\underline{percentile}&&&&&&&&\\
abs 0.95 & 3e-03 &  5e-04 &  9e-04 &  3e-01 & 8e-03 &  2e-03 & 3e-02 & 2e-01 \\
\hline
 \end{tabular}
\end{table}

\begin{table}[ht]
\center
\caption{\footnotesize $|\phi|=0.972$ and  uncorrelated Gaussian innovation :  $\mcL[\epsilon_r]= \mcN(0,1)$, $m=0$, $n=400$}
\label{tableau:0.972-N-0-400}
\footnotesize
\begin{tabular}{|l|cccccc|cc|} 
\hline
\textbf{parameter} & 0.5 & 1 & 1.5 & 1.62 & 1.6 & 0.5 &  0.972 & \\
 \hline
\textbf{estimate}& $\what{a}_1$ & $\what{a}_2$ & $\what{a}_3$ & $\what{a}_4$  & $\what{a}_5$ & $\what{a}_6$ &  $\what{\phi}$ & $\wtilde{\phi}$\\
mean &   0.4938 & 1.0000 & 1.3983 & 1.5579 & 1.5765 & 0.5071 & 0.9662 &  0.8612\\
median & 0.4971 & 1.0000 & 1.4055 & 1.5624 & 1.5777 & 0.5072 & 0.9693 & 0.8745\\
\textbf{error}&&&&&&&&\\
mean  & -7e-03 & 5e-06 & -2e-01 & -7e-02 & -3e-02 & 8e-03 & -6e-03 & -2e-01\\
sigma & 2e-02 & 5e-04 &  3e-02 &  2e-02 &  4e-03 & 1e-03 &  2e-02 &  6e-02\\
\underline{boxplot}&&&&&&&&\\
u. whisker &  2e-02 &  5e-04 & -6e-02 & -4e-02 & -2e-02 & 9e-03 & 2e-02 & -3e-02\\
u. hinge &   3e-03 &  2e-04 & -9e-02 & -6e-02 & -3e-02 & 8e-03 &  4e-03 &-8e-02\\
l. hinge & -2e-02& -2e-04 & -2e-01 & -8e-02 & -3e-02 & 7e-03 & -2e-02 & -2e-01\\
l. whisker & -4e-02 & -5e-04 & -2e-01 & -1e-01 & -4e-02& 7e-03 & -3e-02 & -2e-01\\
\underline{percentile}&&&&&&&&\\
abs 0.95 &  3e-02 &  2e-03 &  2e-01 &  1e-01 & 4e-02 & 9e-03 & 3e-02 & 2e-01\\
\hline
\end{tabular}
\end{table}

\begin{figure}[ht]\center
\includegraphics[height=85mm,width=117mm]{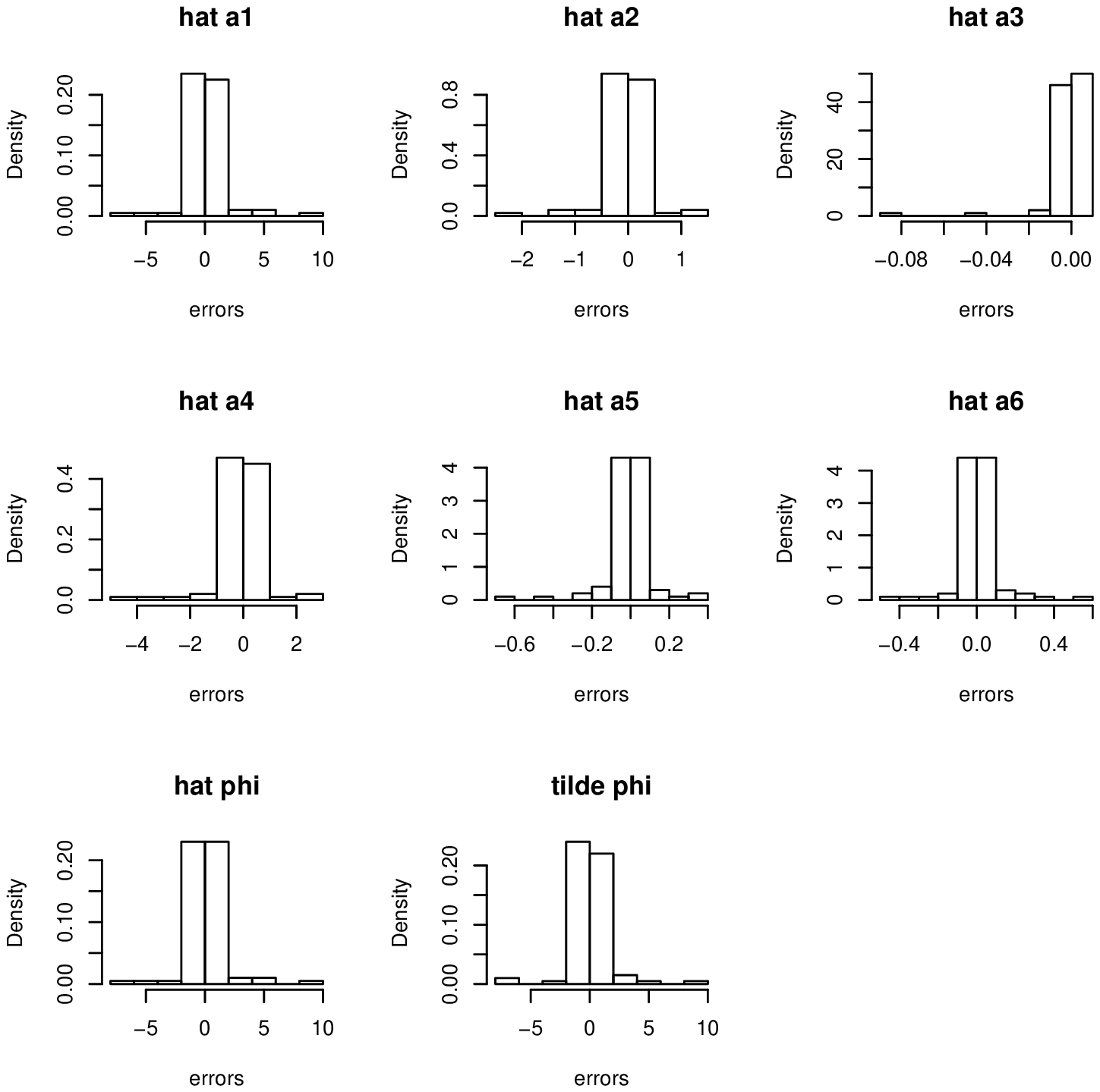}
\caption{\footnotesize Histograms of the scaled errors : $\phi=1.4256$, $\mcL[\epsilon_r]=\mcN(0,1)$, $m=0$, $n=20$}
\label{graph:1.4256-N-0-20}
\end{figure}

\begin{figure}[ht]\center
\includegraphics[height=85mm,width=117mm]{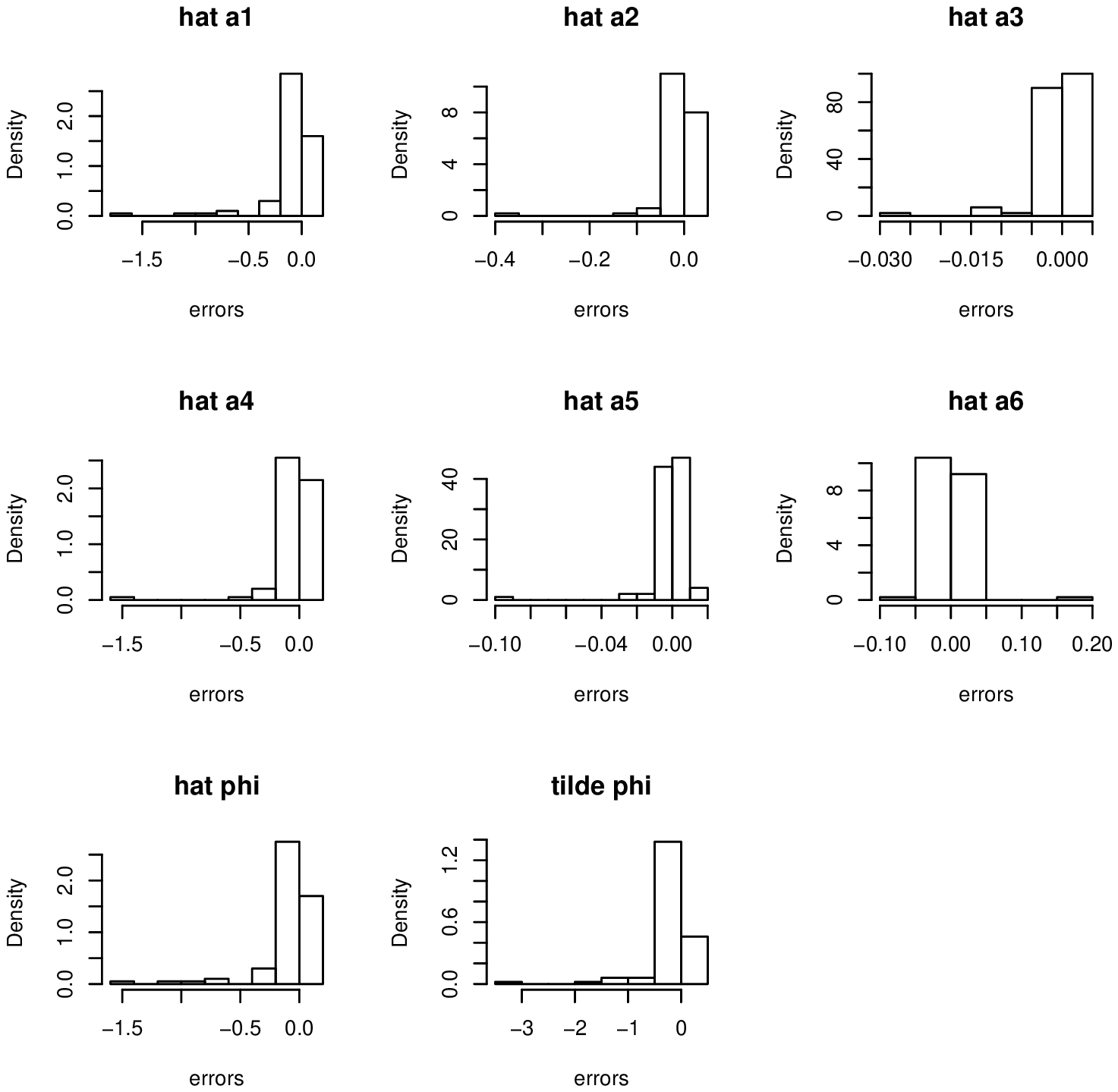}
\caption{\footnotesize Histograms of the scaled errors : $\phi=1.0164$, $\mcL[\epsilon_r]=\mcN(0,1)$, $m=0$, $n=200$}
\label{graph:1.0164-N-0-200}
\end{figure}

\begin{figure}[ht]\center
\includegraphics[height=85mm,width=117mm]{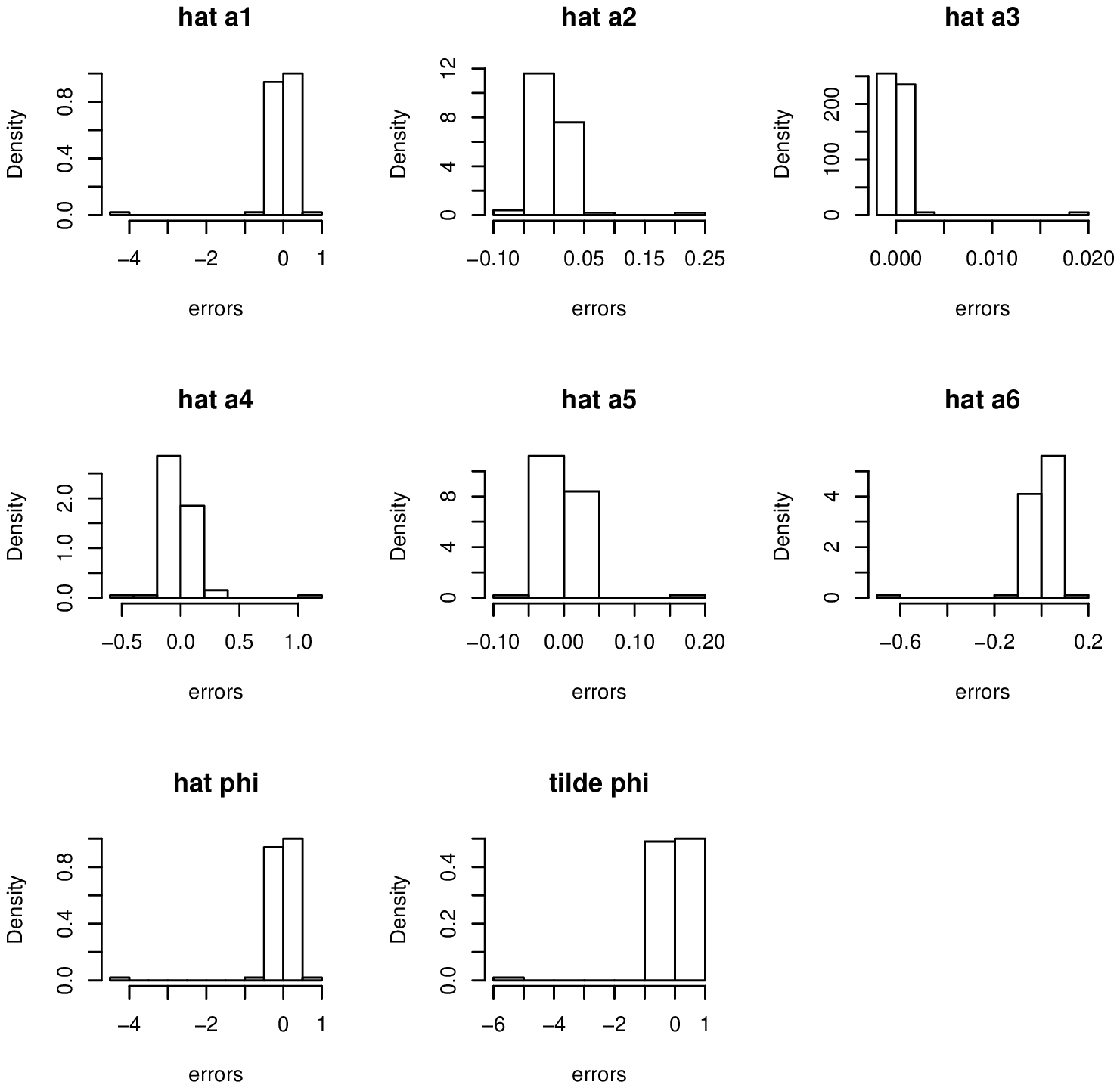}
\caption{\footnotesize Histograms of the scaled errors : $\phi=1.0164$, $\mcL[\epsilon_r]=\mcN(0,1)$, $m=0$, $n=400$}
\label{graph:1.0164-N-0-400}
\end{figure}

\begin{figure}[ht]\center
\includegraphics[height=85mm,width=117mm]{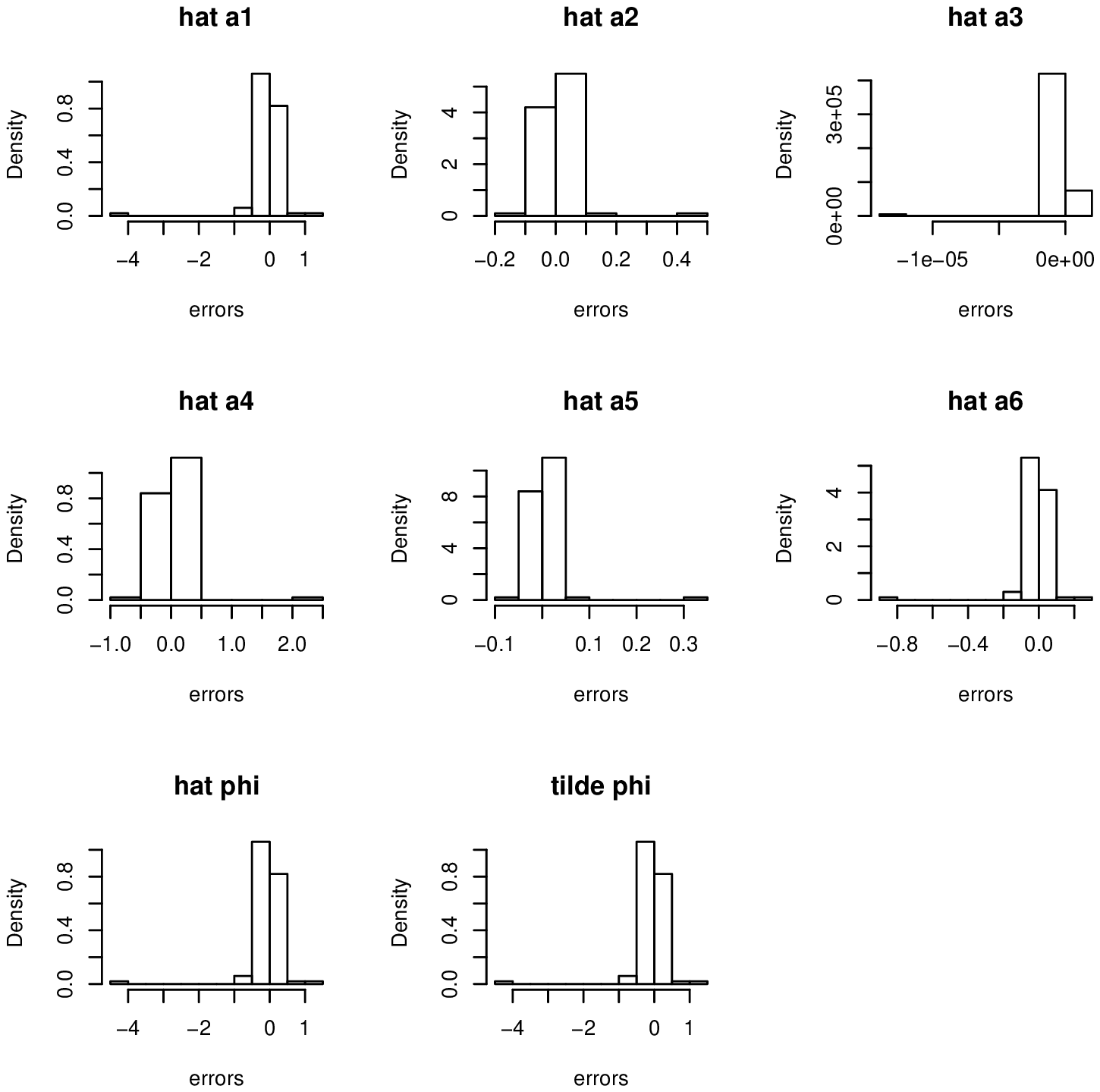}
\caption{\footnotesize Histograms of the scaled errors : $\phi=1.0164$, $\mcL[\epsilon_r]=\mcN(0,1)$, $m=2000$, $n=400$}
\label{graph:1.0164-N-2000-400}
\end{figure}

\begin{figure}[ht]\center
\includegraphics[height=85mm,width=117mm]{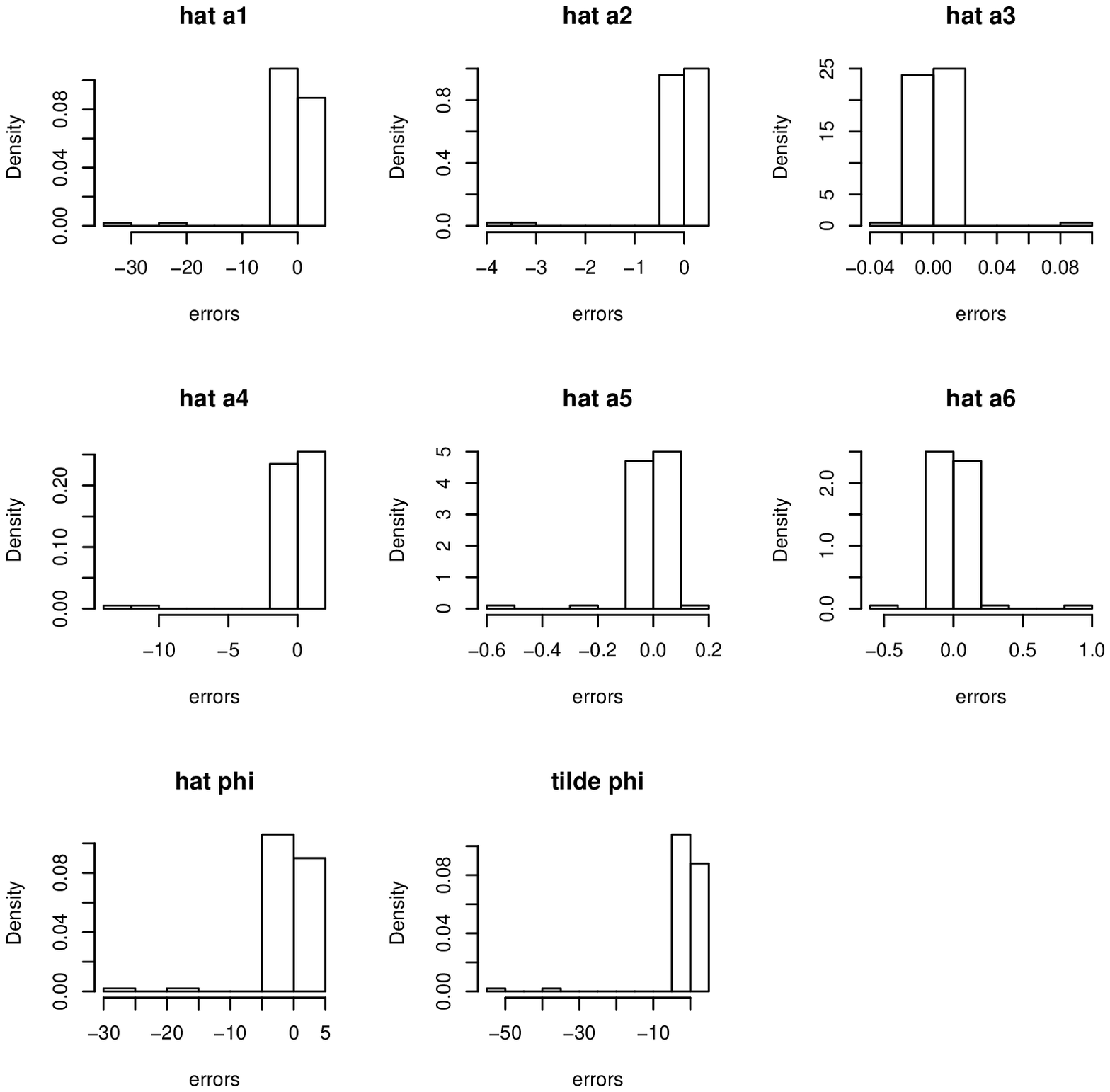}
\caption{\footnotesize Histograms of the scaled errors : $\phi=1.0164$, $\mcL[\epsilon_r]=\mcU[-1000,1000]$, $m=0$, $n=400$}
\label{graph:1.0164-U-0-400}
\end{figure}

\begin{figure}[ht]\center
\includegraphics[height=85mm,width=117mm]{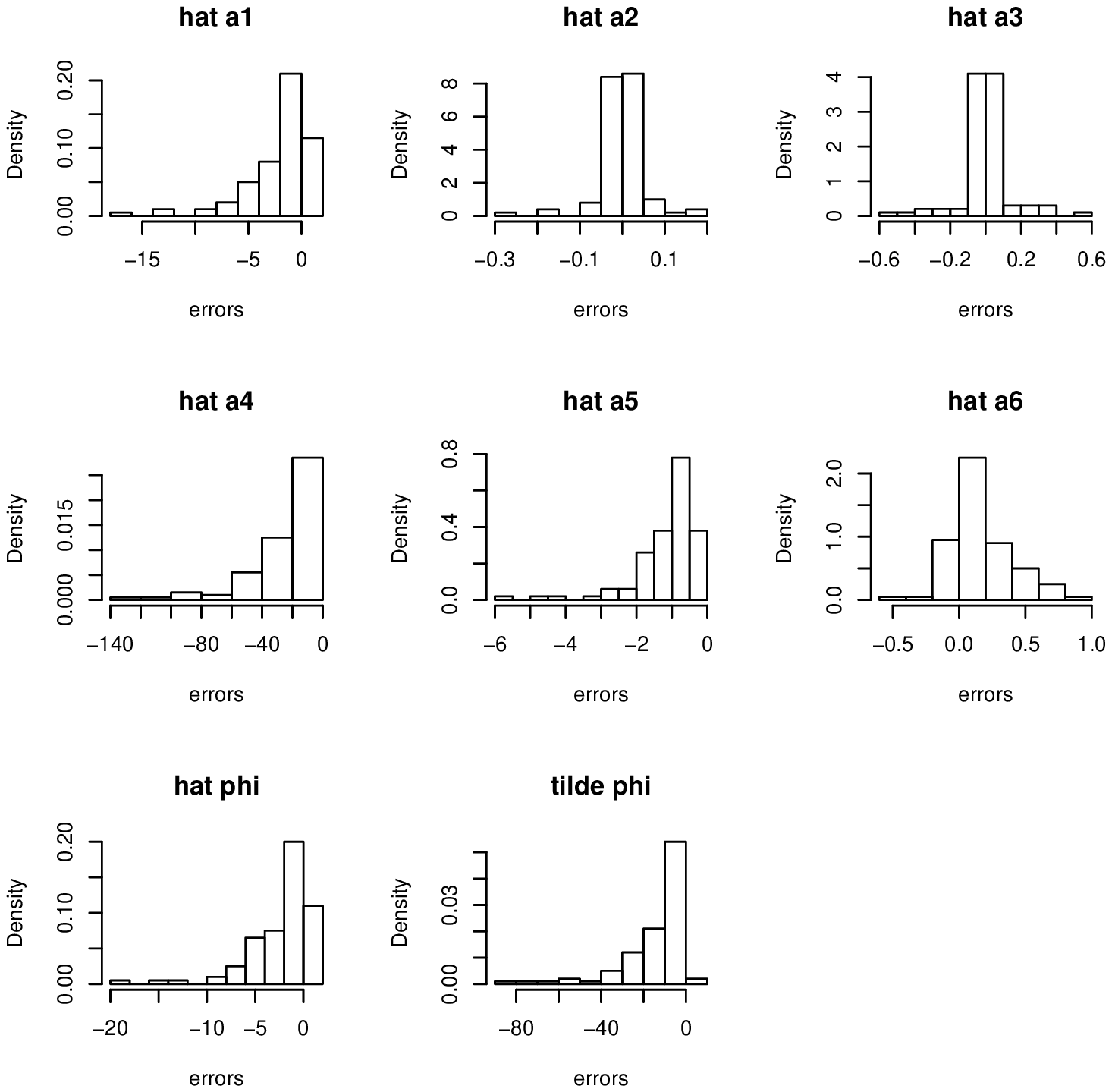}
\caption{\footnotesize Histograms of the scaled errors : $\phi=1$, $\mcL[\epsilon_r]=\mcN(0,1)$, $m=0$, $n=400$}
\label{graph:1-N-0-400}
\end{figure}

\begin{figure}[ht]\center
\includegraphics[height=85mm,width=117mm]{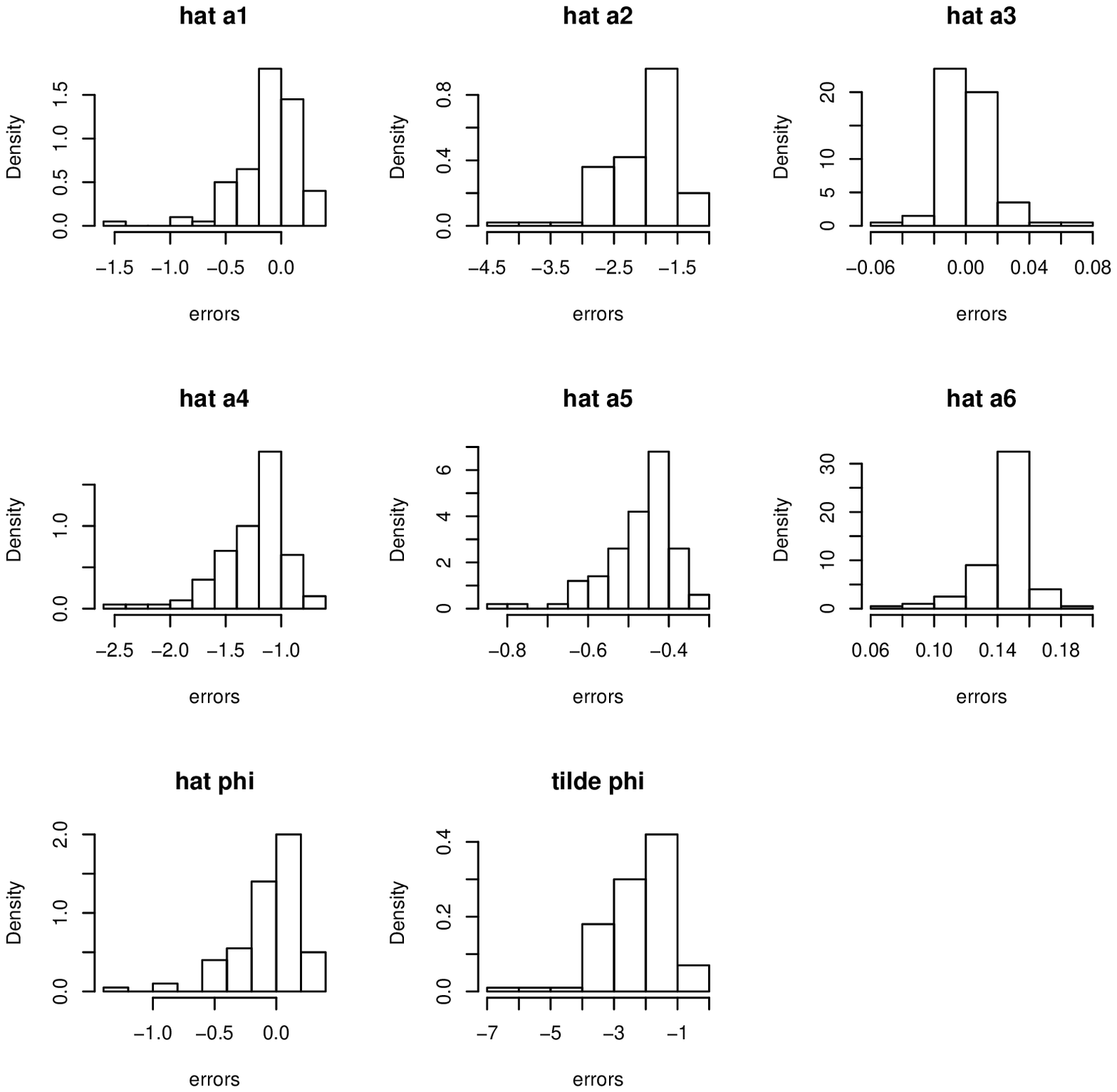}
\caption{\footnotesize Histograms of the scaled errors : $\phi=0.972$, $\mcL[\epsilon_r]=\mcN(0,1)$, $m=0$, $n=400$}
\label{graph:0.972-N-0-400}
\end{figure}

In  each table we write down the mean and the median of the values of each estimator that we have obtained from the 100 replications, as well as some  box-plot characteristics of the  errors :  extrem of upper whishers, upper hinge (3rd quarter), 
lower hinge (1rt quarter) and extrem of the lower whishers. We also give the 95\% percentiles of the absolute values of the errors. 

First  we not that the rates of convergence of the estimates decrease with $|\phi|$ (see Tables~\ref{tableau:1.4256-N-0-20}, \ref{tableau:1.0164-N-0-200}, \ref{tableau:1.0164-N-0-400},  \ref{tableau:1-N-0-400} and \ref{tableau:0.972-N-0-400}). Actually,  from the theoretical point of view, the rate of convergence is $|\phi|^n$ when $|\phi|>1$ (Theorems 1 and 2 above). It is $n$ when $|\phi|=1$ (Boswijk and Franses 1995) and $n^{1/2}$ when $|\phi|<1$ (Basawa and Lund 2001). Thus we produce the histograms of scaled errors, the scale factor being $\phi^{-n}$ when $|\phi|>1$, $n^{-1}$ when $\phi=1$ and $n^{-1/2}$ when $|\phi|<1$. 

We observe when $\phi=1.4256$ and when $\phi=1.0164$ in Figures~\ref{graph:1.4256-N-0-20}, \ref{graph:1.0164-N-0-200}, \ref{graph:1.0164-N-0-400},   \ref{graph:1.0164-N-2000-400} and~\ref{graph:1.0164-U-0-400}, that the histograms of the scaled errors have long tails. 
In Tables~\ref{tableau:1.4256-N-0-20}, \ref{tableau:1.0164-N-0-200}, \ref{tableau:1.0164-N-0-400}, \ref{tableau:1.0164-N-2000-400} and~\ref{tableau:1.0164-U-0-400}, we note that the ratios of the hinges (upper hinge, lower hinge) to the sigmas  of the errors are of order of magnitude $10^{-1}$ or less. It is the same for the ratios of  the hinges to the whiskers.
These phenomena correspond to the fact that the limiting distributions have heavy  tails. 
See (Aknouche 2013) for independent innovation.

When $\phi=1$, in Figure~\ref{graph:1-N-0-400} and in Table~\ref{tableau:1-N-0-400} we observe the distributions for some estimates ($\what{a}_1$, $\what{a}_3$, $\what{a}_6$, $\what{\phi}$) have also relatively long tails. But the phenomenon is very less apparent than previously. See (Phillips 1987) for the autoregressive model with a unit root. 
 
When $\phi=0.972$, in Figure~\ref{graph:0.972-N-0-400} the tails of the histograms are shorter than for the others values of $\phi$. In Table~\ref{tableau:0.972-N-0-400} the hinges and the whishers are often with the same order of magnitude or larger than the sigmas.
This fits to the theoretical result, the limiting distribution being Gaussian (Basawa and Lund 2001).

When the innovation $\{u_k\}$ is correlated with $m=2000$ for $\phi=1.0162$, in Table~\ref{tableau:1.0164-N-2000-400} we observe few change in the performances of the estimates with respect to the case when the innovation is independent (Table~\ref{tableau:1.0164-N-0-400}). The  confidence intervals are smaller for $a_3=1$, and larger for $a_4=1.5$. However in Figure~\ref{graph:1.0164-N-2000-400} with $\phi=1.0164$ and $m=2000$, the tails of the histograms are farther from 0 than in  Figure~\ref{graph:1.0164-N-0-400} when $m=0$. 

Finally, comparing the statistics of the errors of the two estimates $\what{\phi}$ and $\wtilde{\phi}$ in each table, we find out that  they have globally the same order of magnitude whenever $|\phi|>1$ and the histograms are quite similar. However when $|\phi|\leq 1$ it seems that $\what{\phi}$ gives better results than $\tilde{\phi}$. The comparison of these two estimators need more investigation to determine whether one of them is better than the other. 

\section*{Conclusion}
In this paper we have studied the least squares  estimators of the coefficients of  explosive PAR(1) time series under relatively weak dependence assumptions. It is  quite interesting to see how heavy tailed distributions enter in this context.
We have also constructed two estimators of the product of these coefficients, which characterizes the explosive behaviour of the model. It would be worth  to investigate the comparison of these estimators and also to consider more general PAR and PARMA models.

\section*{Appendix}
\subsubsection*{Proof of Proposition~\ref{prop:convX-|phi|>1}}
Let $r=1,\dots,P$ be fixed.
We know that $\phi^{-n}X_{nP+r}=A_1^r(X_0+Z_{n-1})+\phi^{-n}U_n^{(r)}.$
Since the sequence $\{U_n^{(r)}\}$ is stationary with finite second order moments and $|\phi|>1$, we can readily establish that 
$$\lim_{n\to\infty}\phi^{-n}U_n^{(r)}=0\qquad\mbox{a.e and in q.m.}.$$
Now we  show that $Z_n$ converges  almost surely and in quadratique mean.
First we have\\
\begin{eqnarray}
&&\var\left[\sum_{l=j}^{k}\phi^{-l-1}U_{l}^{(P)}\right]\nonumber\\
&&\qquad\qquad=\,
\sum_{l=j}^{k}\phi^{-2l-2}\var\left[U_{l}^{(P)}\right]+2\sum_{l_1=j}^{k-1}\sum_{l_2=l_1+1}^{k}\phi^{-l_1-l_2-2}\cov\left[U_{l_1}^{(P)},U_{l_2}^{(P)}\right]\nonumber\\
&&\qquad\qquad\leq\,
\frac{\phi^{-2j}K_0^{(P)}}{\phi^{2}-1}+2\frac{\phi^{-2j}K_0^{(P)}}{(\phi^2-1)(|\phi|-1)}=\frac{\phi^{-2j}K_0^{(P)}}{(|\phi|-1)^2}.\label{ineq:Sj-cauchy}
\end{eqnarray}
Then the sequence $\{Z_n\}$ is a Cauchy sequence in the Hilbert space $L^2(\Prob)$, thus this sequence converges to some random variable $\zeta$ in quadratic mean. Moreover
\begin{equation*}
\Esp\left[\left(Z_n-\zeta\right)^2\right]\leq
\frac{\phi^{-2n}K_0^{(P)}}{(|\phi|-1)^2}.
\end{equation*}
From the exponential decreasing to 0 of the right hand side of the last inequality, we can readily deduce the convergence almost sure following the usual method applying Borel Cantelli lemma.

As for the second part of the proposition, 
note that
$$X_{nP+r}-\phi^{n}A_1^r\left(X_0+\zeta\right)=
U_n^{(r)}+\phi^{n}A_1^r\left(Z_{n-1}-\zeta\right)$$
and
$$\phi^{n}\left(Z_{n-1}-\zeta\right)=-\sum_{l=n}^{\infty}\phi^{n-l-1}U_{l}^{(P)}=-\sum_{l=0}^{\infty}\phi^{-l-1}U_{n+l}^{(P)}.
$$
Since the sequence of random vectors $\{(U_n^{1},\dots,U_n^{(P)}):n\in\mbZ\}$ is stationarily distributed, we deduce that
$$
\mcL\left[\left(U_n^{(r)},\sum_{l=0}^{\infty}\phi^{-l-1}U_{n+l}^{(r)}\right):r=1,\dots,P\right]=\mcL\left[\left(U_0^{(r)},\sum_{l=0}^{\infty}\phi^{-l-1}U_{l}^{(r)}\right):r=1,\dots,P\right].
$$
Then from the definition of $\zeta$,  we readily achieve the proof of  Proposition~\ref{prop:convX-|phi|>1} .
\hfill{\small$\Box$}

\subsubsection*{Proof of Theorem~\ref{thrm:conv-loi-|phi|>1}}
First in the two following lemmas we study the asymptotic behaviours of $B_n^{(r)}$ and $C_n^{(r)}$.
 
\begin{lemma}\label{lemm:Bn} 
$$\lim_{n\to\infty}\phi^{-2n}B_n^{(r)}=\frac{\big(A_1^{r-1}\big)^2}{\phi^2-1}(X_0+\zeta)^2\qquad\mbox{a.s. and in}\,\, L^1(\Prob).$$
\end{lemma}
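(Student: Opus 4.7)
The plan is to expand the square in the definition~(\ref{def:Bnr}) using $X_{jP+r-1}=A_1^{r-1}\phi^j(X_0+Z_{j-1})+U_j^{(r-1)}$, so that
$$\phi^{-2n}B_n^{(r)}=(A_1^{r-1})^2 S_n^{(1)}+2A_1^{r-1}S_n^{(2)}+S_n^{(3)},$$
where $S_n^{(1)}=\sum_{j=0}^{n-1}\phi^{2(j-n)}(X_0+Z_{j-1})^2$, $S_n^{(2)}=\sum_{j=0}^{n-1}\phi^{j-2n}(X_0+Z_{j-1})U_j^{(r-1)}$, and $S_n^{(3)}=\phi^{-2n}\sum_{j=0}^{n-1}(U_j^{(r-1)})^2$. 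Then I would prove that $S_n^{(1)}\to(X_0+\zeta)^2/(\phi^2-1)$ while the two remainder terms $S_n^{(2)}$ and $S_n^{(3)}$ vanish, in both senses (a.s. and in $L^1$).

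For $S_n^{(1)}$, substitute $k=n-j$ to obtain $S_n^{(1)}=\sum_{k=1}^{n}\phi^{-2k}(X_0+Z_{n-k-1})^2$. By Proposition~\ref{prop:convX-|phi|>1}, $Z_m\to\zeta$ almost surely, so for a.e.\ $\omega$ the sequence $\{Z_m(\omega)\}$ is bounded, hence $(X_0+Z_{n-k-1})^2$ is bounded uniformly in $n,k$ by an a.s.\ finite random variable. Since the weights $\phi^{-2k}$ are summable, dominated convergence (over counting measure on $\mbN$) yields $S_n^{(1)}\to(X_0+\zeta)^2\sum_{k=1}^\infty\phi^{-2k}=(X_0+\zeta)^2/(\phi^2-1)$ almost surely. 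The $L^1$ version uses instead the uniform bound $\sup_m\Esp[(X_0+Z_m)^2]<\infty$, which follows from the $L^2$ convergence $Z_m\to\zeta$ together with the bound in~(\ref{ineq:Sj-cauchy}); dominated convergence in $L^1$ then applies.

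For $S_n^{(2)}$ and $S_n^{(3)}$, convergence in $L^1$ is immediate from Cauchy--Schwarz and stationarity of $\{U_j^{(r-1)}\}$:
$$\Esp|S_n^{(2)}|\leq \sum_{j=0}^{n-1}|\phi|^{j-2n}\sqrt{\Esp[(X_0+Z_{j-1})^2]\,\Esp[(U_j^{(r-1)})^2]}=O(|\phi|^{-n}),$$
$$\Esp[S_n^{(3)}]=|\phi|^{-2n}\,n\,K_0^{(r-1)}=o(1).$$
For the almost sure convergence, the Markov inequality combined with the bounds $\Esp|S_n^{(2)}|=O(|\phi|^{-n})$ and $\Esp[S_n^{(3)}]=O(n|\phi|^{-2n})$, both of which are summable in $n$ because $|\phi|>1$, gives the result by Borel--Cantelli. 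Combining the three pieces yields the lemma.

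The main obstacle I anticipate is the limit of $S_n^{(1)}$: one must be careful with the double dependence on $n$ in the indices $n-k-1$ inside the finite sum up to $k=n$. The cleanest way around it is the domination argument above, exploiting the almost sure boundedness of the convergent sequence $\{Z_m\}$ (for the a.s.\ claim) and the uniform $L^2$ bound (for the $L^1$ claim), so that the tail $\sum_{k>K}\phi^{-2k}$ can be made uniformly small.
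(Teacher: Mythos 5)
Your proof is correct, but it takes a more hands-on route than the paper. The paper's own argument is two lines: it invokes Proposition~\ref{prop:convX-|phi|>1}, which already gives $\phi^{-j}X_{jP+r-1}\to A_1^{r-1}(X_0+\zeta)$ a.s. and in quadratic mean, writes $\phi^{-2n}B_n^{(r)}=\sum_{j=0}^{n-1}\phi^{2(j-n)}\big(\phi^{-j}X_{jP+r-1}\big)^2$, and concludes by the Toeplitz lemma on weighted sums with the geometric weights $\phi^{2(j-n)}$. You instead expand the square via the decomposition~(\ref{def:Bnr}) into $S_n^{(1)},S_n^{(2)},S_n^{(3)}$, prove the main term by dominated convergence over the summable weights $\phi^{-2k}$ (which is in effect a self-contained proof of the Toeplitz step in this special case), and kill the cross and remainder terms by Cauchy--Schwarz, Markov and Borel--Cantelli. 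What the paper's route buys is brevity, since all the work was already done in Proposition~\ref{prop:convX-|phi|>1}; what yours buys is explicit rates (the $O(|\phi|^{-n})$ and $O(n|\phi|^{-2n})$ bounds), no appeal to the Toeplitz lemma, and an $L^1$ statement handled directly rather than implicitly. One small step you should spell out in the $L^1$ part: besides the uniform bound $\sup_m\Esp[(X_0+Z_m)^2]<\infty$, you need that for each fixed $k$, $\Esp\big|(X_0+Z_{n-k-1})^2-(X_0+\zeta)^2\big|\to0$, which follows from the quadratic-mean convergence $Z_m\to\zeta$ together with Cauchy--Schwarz; with that remark your dominated-convergence (or tail-splitting) argument is complete.
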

\begin{proof}
We have seen that that $\phi^{-n} X_{nP+r-1}$ converges to $A_1^{r-1}(X_0+\zeta)$ almost surely and in quadratic mean. Then Toeplitz lemma on series convergence gives the result. 
\hfill{\small$\Box$}
\end{proof}


\begin{lemma}\label{lemm:Cn}
$$\lim_{n\to\infty}\phi^{-2n}C_n^{(r)}=0\qquad\mbox{a.s. and in}\,\, L^1(\Prob)$$
for any $r=1,\dots,P$.
Moreover, under the mixing hypothesis~{\rm (\textbf{M})} we have
$$\lim_{n\to\infty}\mcL\left[\phi^{-n}C_n^{(r)}:r=1,\dots,P\right]= \mcL\left[A_1^{r-1}(X_0+\zeta)\zeta_r^*:r=1,\dots,P\right].$$
\end{lemma}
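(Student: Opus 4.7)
The proof splits $C_n^{(r)}$ according to expression~(\ref{def:Cnr}) and treats the dominant and remainder pieces separately, using elementary moment bounds for the first assertion and then adding a truncation/mixing argument for the distributional limit.

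For the $L^1$ and a.s.\ convergence of $\phi^{-2n}C_n^{(r)}$ to $0$, the idea is that $\Esp[X_{jP+r-1}^2]$ grows geometrically, at rate $|\phi|^{2j}$, by~(\ref{def:Bnr}) and the computations already used in Lemma~\ref{lemm:Bn}, while $\Esp[u_{jP+r}^2]$ is bounded by periodicity. Cauchy--Schwarz then gives $\Esp|X_{jP+r-1}u_{jP+r}|=O(|\phi|^j)$, so summing and dividing by $|\phi|^{2n}$ yields $\Esp|\phi^{-2n}C_n^{(r)}|=O(|\phi|^{-n})\to 0$. The same line of argument applied to the second moment produces $\Esp[(\phi^{-2n}C_n^{(r)})^2]=O(|\phi|^{-2n})$, which is summable in $n$; Chebyshev plus Borel--Cantelli then gives the almost sure convergence.

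For the distributional part, the second sum in~(\ref{def:Cnr}) is a sum of $n$ stationary, integrable random variables, hence of order $n$ in $L^1$; after multiplication by $\phi^{-n}$ it vanishes in $L^1$. Only the first sum matters, and the change of index $l=n-j$ rewrites it as $A_1^{r-1}\sum_{l=1}^{n}\phi^{-l}(X_0+Z_{n-l-1})u_{(n-l)P+r}$. Replacing $X_0+Z_{n-l-1}$ by $X_0+\zeta$ introduces an error that is controlled by a standard cut-and-let argument: truncate the series at some level $L$, use the $L^2$-bound on $Z_{n-l-1}-\zeta$ from~(\ref{ineq:Sj-cauchy}) for $l\leq L$ (where $n-l-1\to\infty$), and bound the tail $l>L$ uniformly in $n$ via the geometric factor $\phi^{-l}$. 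The reduction leaves us to show that
\[
\Bigl(X_0+\zeta,\;\sum_{l=1}^n\phi^{-l}u_{(n-l)P+r}:r=1,\dots,P\Bigr)\xrightarrow{d}\bigl(X_0+\zeta,\;\zeta_r^*:r=1,\dots,P\bigr),
\]
with $(\zeta_r^*)_r$ independent of $(X_0,\zeta)$.

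The marginal law of the second block is identified by time reversal and periodicity: the construction of $\{u_k^*\}$ gives $\sum_{l=1}^n\phi^{-l}u_{(n-l)P+r}\stackrel{d}{=}\sum_{l=1}^n\phi^{-l}u_{lP-r}^*$, and the latter converges a.s.\ and in $L^2$ to $\zeta_r^*$ by the same Cauchy argument as in Proposition~\ref{prop:convX-|phi|>1}. The key remaining step, and the main obstacle, is the asymptotic independence between $X_0+\zeta$ and the partial sums, proved without any rate assumption on $\alpha(n)$. The plan here is a double truncation: approximate $X_0+\zeta$ by $X_0+Z_{k_0}$, which is $\mcF^{k_0P}$-measurable, and approximate the partial sum by its head $\sum_{l=1}^L\phi^{-l}u_{(n-l)P+r}$, which is measurable with respect to $\mcF_{(n-L)P+r-P+1}$. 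For $n$ large the gap between these $\sigma$-fields tends to infinity, so condition~(\textbf{M}) forces the joint characteristic function to factorize in the limit; sending first $n\to\infty$, then $L\to\infty$ and $k_0\to\infty$ recovers the product law, and combining with the identification of the marginals finishes the proof.
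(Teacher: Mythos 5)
Your argument reaches the right conclusions and, for the distributional part, takes a genuinely different route from the paper. The paper never substitutes the limit $X_0+\zeta$ inside the sum: it blocks the sum at $n_1=n/4$ and $n_2=n/2$, shows the pieces involving $Z_{j-1}-Z_{n_1}$ and the indices $j<n_2$ are $o(1)$ in $L^1$ after scaling by $\phi^{-n}$, and then applies condition (\textbf{M}) directly to the two random variables actually appearing in the decomposition, $X_0+Z_{n_1}$ (measurable w.r.t.\ $\mcF^{(n_1+1)P}$) and $\Psi_{n_2}^{n,r}$ (measurable w.r.t.\ $\mcF_{n_2P}$), whose separation $(n/4-1)P$ grows with $n$; time reversal via $\{u_k^*\}$ identifies $\lim\mcL[\Psi_{n_2}^{n,r}]$ and Cram\'er--Wold gives the vector statement. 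You instead replace $X_0+Z_{n-l-1}$ by $X_0+\zeta$ (legitimate: splitting at a level $L$ gives an $L^1$ error of order $L|\phi|^{-n}+|\phi|^{-L}$, uniformly in $n$) and then restore asymptotic independence by a double truncation --- $X_0+Z_{k_0}$ for the past, the head of length $L$ of the reversed sum for the future --- together with a three-epsilon characteristic-function argument, taking $n\to\infty$ first and then $L,k_0\to\infty$. Both routes use only $\alpha(n)\to0$ with no rate; yours trades the paper's growing-block bookkeeping for fixed-level truncations at the cost of one extra approximation layer, and your identification of the marginal law via $u^*_{lP-r}$ and of the product structure of the limit agrees with the paper's.

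There is one step that fails as written: for the almost sure convergence of $\phi^{-2n}C_n^{(r)}$ you invoke a second-moment bound $\Esp[(\phi^{-2n}C_n^{(r)})^2]=O(|\phi|^{-2n})$ and Chebyshev. Computing $\Esp[(C_n^{(r)})^2]$ requires terms such as $\Esp[X_{jP+r-1}^2u_{jP+r}^2]$, i.e.\ fourth-order moments of $X_0$ and of the innovation, which are not assumed (the paper only requires square integrability). The repair is immediate and is precisely the paper's device: your own $L^1$ estimate $\Esp\big|\phi^{-2n}C_n^{(r)}\big|=O(|\phi|^{-n})$ decreases geometrically, so Markov's inequality and Borel--Cantelli already yield the almost sure convergence; the Chebyshev step should simply be dropped.
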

\begin{proof}
To prove the lemma we  study the left hand side of equality~(\ref{def:Cnr}).

\medskip
1) For the last term of  equality~(\ref{def:Cnr}), Cauchy Schwarz inequality entails
$$\Esp\left[\left|\sum_{j=0}^{n-1}U_j^{(r-1)}u_{jP+r}\right|\right]
\leq
\sum_{j=0}^{n-1}\Esp\left[\Big|U_j^{(r-1)}\Big|^2\right]^{1/2}\Esp\left[\big|u_{jP+r}\big|^2\right]^{1/2}.$$
As the sequences $\{u_{jP+r}\}$ and $\{U_j^{(r-1)}\}$ are stationary, we have 
$$\Esp\left[\left|\sum_{j=0}^{n-1}U_j^{(r-1)}u_{jP+r}\right|\right]= n\sqrt{K_0^{(r-1)}}\sigma_r.$$
Thus
$$\lim_{n\to\infty}\phi^{-n}\sum_{j=0}^{n-1}U_j^{(r-1)}u_{jP+r}=0\qquad \mbox{in}\,\,L^1(\Prob).$$
Furthermore, thanks to the exponential decreasing rate of convergence to 0 in $L^1(\Prob)$, applying Borel Cantelli lemma, we easily establish the almost sure convergence
$$\lim_{n\to\infty}\phi^{-n}\sum_{j=0}^{n-1}U_j^{(r-1)}u_{jP+r}=0\qquad\mbox{a.s.}$$
 
\smallskip
2) To study the first term of left hand side of equality~(\ref{def:Cnr}) the idea is first to isolate the sums of $Z_{j-1}$ and of $u_{jP+r}$. Then  define blocks that separate the first  and the last terms  of the time series in order to be able to use the asymptotic independence which is given by the strong mixing condition on the innovation.
Thus assume without lost of generality that $n$ is a multiple of 4 and let $n_1=n/4$, and $n_2=n/2$. Then we can write
\begin{eqnarray}
&&\sum_{j=0}^{n-1}\phi^j(X_0+Z_{j-1})u_{jP+r}=
\sum_{j=0}^{n-1}\phi^j(Z_{j-1}-Z_{n_1})u_{jP+r}\nonumber\\
&&\qquad\qquad\qquad+\,
\sum_{j=0}^{n_2-1}\phi^j(X_0+Z_{n_1})u_{jP+r}
+\sum_{j=n_2}^{n-1}\phi^j(X_0+Z_{n_1})u_{jP+r}.\label{equal:nn1n2}
\end{eqnarray}

\smallskip\noindent
(i) 
Thanks to inequality~(\ref{ineq:Sj-cauchy}) we have
\begin{eqnarray*}
&&\Esp\left[\left|\sum_{j=0}^{n-1}\phi^j(Z_{j-1}-Z_{n_1})u_{jP+r}\right|\right]\leq
\sum_{j=0}^{n-1}|\phi|^j\Esp\big[|Z_{j-1}-Z_{n_1}|^2\big]^{1/2}\Esp\big[|u_{jP+r}|^2\big]^{1/2}\\
&&\qquad\quad
\leq\,
\sum_{j=0}^{n_1}|\phi|^j\Esp\big[|Z_{j-1}-Z_{n_1}|^2\big]^{1/2}\sigma_r
+\sum_{j=n_1+1}^{n-1}|\phi|^j\Esp\big[|Z_{j-1}-Z_{n_1}|^2\big]^{1/2}\sigma_r\\
\\
&&\qquad\quad
\leq\,
\frac{n_1+1+|\phi|^{n-n_1}}{|\phi|-1}\sqrt{K_0^{(P)}}\sigma_r.
\end{eqnarray*}
Hence
$$\lim_{n\to\infty}\phi^{-n}\sum_{j=0}^{n-1}\phi^j(Z_{j-1}-Z_{n_1})u_{jP+r}=0\qquad\mbox{in}\,\,L^1(\Prob).$$
Using Borel Cantelli lemma, the exponential decreasing rate of convergence permits to prove the almost sure convergence.

\medskip
\noindent
(ii)  Besides, the second term of the right hand side of equality~(\ref{equal:nn1n2}) can be estimated as follows 
\begin{eqnarray*}
&&\Esp\left[\left|(X_0+Z_{n_1})\sum_{j=0}^{n_2-1}\phi^ju_{jP+r}\right|\right]
\leq
\Esp\big[|X_0+Z_{n_1}|^2\big]^{1/2}\sum_{j=0}^{n_2-1}|\phi|^j\Esp\big[|u_{jP+r}|^2\big]^{1/2}\\
&&\qquad\quad\leq\,
\left(\Esp\big[|X_0|^2\big]^{1/2}+\sum_{j=0}^{n_1-1}|\phi|^{-l-1}\Esp\big[|U_{l}^{(P)}|^2\big]^{1/2}\right)\left(\sum_{j=0}^{n_2-1}|\phi|^j\Esp\big[(u_{jP+r})^2\big]^{1/2}\right)\\
&&\qquad\quad\leq\,
\frac{\Big(\Esp\big[|X_0|^2\big]^{1/2}(|\phi|-1)+\sqrt{K_0^{(P)}}\Big)\sigma_r|\phi|^{n_2}}{(|\phi|-1)^2}.
\end{eqnarray*}
Thus 
$$\lim_{n\to\infty} \phi^{-n}(X_0+Z_{n_1})\sum_{j=0}^{n_2-1}\phi^ju_{jP+r}
=0\qquad\mbox{in}\,\,L^1(\Prob).$$
As in part~(i), we  obtain the almost sure convergence.

\medskip
\noindent
(iii)
It remains to study the asymptotic behaviour of
$(X_0+Z_{n_1})\Psi_{n_2}^{n,r}$ where
\begin{equation}\label{Psir}
\Psi_{n_2}^{n,r}\defin\phi^{-n}\sum_{j=n_2}^{n-1}\phi^j u_{jP+r}=\sum_{j=n_2}^{n-1}\phi^{j-n} u_{jP+r}=\sum_{j=1}^{n-n_2}\phi^{-j} u_{(n-j)P+r}.
\end{equation}
We know that $X_0+Z_{n_1}$ converges to $X_0+\zeta$ almost surely and in quadratic mean.  (Proposition~\ref{prop:convX-|phi|>1}). 
Since
$$\Esp\big[|\Psi_{n_2}^{n,r}|^2\big]^{1/2}\leq
\sum_{j=1}^{n-n_2}|\phi|^{-j} \Esp\big[|u_{(n-j)P+r}|^2\big]^{1/2}
\leq \frac{\sigma_r}{|\phi|-1},$$
$\phi^{-n}\Psi_{n_2}^{n,r}$ converges to 0 in quadratic mean and also almost surely.
Hence $\phi^{-n}(X_0+Z_{n_1})\Psi_{n_2}^{n,r}$ converges to 0 in $L^1(\Prob)$ and  almost surely.

\medskip
\noindent
(iv) Now we establish the convergence in distribution of $(X_0+Z_{n_1})\Psi_{n_2}^{n,r}$.
Note that $(X_0+Z_{n_1})$ can be expressed with $X_0,u_1,u_2,\dots,u_{(n_1+1)P}$ while $\Psi_{n_2}^{n,r}$ can be expressed with $u_{n_2P},\dots,u_{nP}$.
Hence, as $n_1=n_2/2=n/4$, the mixing property entails that 
$$\Big|\Prob\big[X_0+Z_{n_1}\in A,\Psi_{n_2}^{n,r}\in B\big]-\Prob\big[X_0+Z_{n_1}\in A\big]\Prob\big[\Psi_{n_2}^{n,r}\in B\big]\Big|\leq\alpha\big((n/4-1)P\big)$$
for all Borel subsets $A$ and $B$ of $\mbR$, where $\alpha(\cdot)$ is the strong mixing coefficient. The mixing hypothesis entails  that $\alpha\big((n/4-1)P)\big)$ tends to 0 as $n$ goes to $\infty$, thus
$$\lim_{n\to\infty}\Big(\Prob\big[X_0+Z_{n_1}\in A,\Psi_{n_2}^{n,r}\in B\big]-\Prob\big[X_0+Z_{n_1}\in A\big]\times\Prob\big[\Psi_{n_2}^{n,r}\in B\big]\Big)=0.$$
for all Borel subsets $A$ and $B$. So $X_0+Z_{n_1}$ and $\Psi_{n_2}^{n,r}$ are asymptotically independent.
We know that $X_0+Z_{n_1}$ converges in quadratic mean so in distribution to $X_0+\zeta$ (Proposition~\ref{prop:convX-|phi|>1}). 
Now it remains to study the behaviour of  $\Psi_{n_2}^{n,r}$.
 
\medskip\noindent
(v) 
Since the time series $\{u_k\}$ is periodically distributed, the time series $\{u_k^*\}$ is also periodically distributed. Denoting
$$\Psi_n^{*(r)}\defin\sum_{j=1}^{n}\phi^{-j} u_{jP-r}^*,$$
we have  $\mcL[\Psi_{n-n_2}^{*(r)}]=\mcL[\Psi_{n_2}^{n,r}]$, and the sequence $\{\Psi_n^{*(r)}\}$ converges almost surely and in quadratic mean to some random variable $\zeta_r^*$. Then $\Psi_{n_2}^{n,r}$ converge in distribution to $\zeta_r^*$ as $n-n_2=n/2\to\infty$. 
Consequently $(X_0+Z_{n_1})\Psi_{n_2}^{n,r}$ converges in distribution to $(X_0+\zeta)\zeta^{*(r)}$ where $X_0+\zeta$ and $\zeta^{*(r)}$ are independent random variables. Furthermore from definition~(\ref{Psir}), we easily deduce the distribution of the $\zeta_r^*$s.

Following the same lines with Cram\'er device we can establish the multidimensional convergence. 
\hfill{\small$\Box$}
\end{proof}

\begin{proof} \!\textbf{of Theorem~\ref{thrm:conv-loi-|phi|>1}}. The almost sure convergence is a direct consequence of Lemmas~\ref{lemm:Bn} and~\ref{lemm:Cn}.
To prove the convergence in distribution, we first apply Cram\'er device to prove the convergence in distribution of $\big(\phi^{-2n}B_n^{(1)},\dots, \phi^{-2n}B_n^{(P)},\phi^{-n}C_n^{(1)},\dots,\phi^{-n}C_n^{(P)}\big)$. 
For this purpose, let $\alpha_1,\dots,\alpha_P,\beta_1,\dots,\beta_P\in\mbR$
and establish the convergence in distribution of $$
S_n\defin\sum_{r=1}^P\alpha_r\phi^{-2n}B_n^{(r)}+\sum_{r=1}^P\beta_r\phi^{-n}C_n^{(r)}.
$$
Following the same method as in the proof of Lemma~\ref{lemm:Cn} we define blocks to separate the  terms $B_n^{(r)}$ and $C_n^{(r)}$, as well as to apply the asymptotic independence given by the strong mixing condition. Denote $n_1=n_2/2=n/4$. Then $S_n$ can be decomposed as
$$S_n
=\sum_{r=1}^P\alpha_r\big(\phi^{-2n}B_n^{(r)}-\phi^{-2n_1}B_{n_1}^{(r)}\big)+\phi^{-2n_1}\sum_{r=1}^P\alpha_rB_{n_1}^{(r)}+\phi^{-n}\sum_{r=1}^P\beta_rC_n^{(r)}.$$

Thanks to Lemma~\ref{lemm:Bn}, 
$\phi^{-2n}B_n^{(r)}-\phi^{-2n_1}B_{n_1}^{(r)}$ converges to 0 almost surely.
Moreover thanks to the parts 1,  2(i) and 2(ii) of the proof of Lemma~\ref{lemm:Cn}, it remains to study 
$$\sum_{r=1}^n\alpha_r\phi^{-2n_1}B_{n_1}^{(r)}+\sum_{r=1}^n\beta_r(X_0+Z_{n_1})\Psi_{n_2}^{n,r}.$$
Then from the strong mixing condition,  $\big(B_{n_1}^{(1)},\dots,B_{n_1}^{(P)},X_0+Z_{n_1}\big)$ is asymptotically independent with respect to $\big(\Psi_{n_2}^{n,1},\dots,\Psi_{n_2}^{n,P}\big)$, 
and following the same lines as in the  part 2(v) of the proof of Lemma~\ref{lemm:Cn} we deduce the convergence in distribution of $S_n$ as $n\to\infty$.
 
Finally  the application of the continuous mapping theorem for convergence in distribution completes the proof of the theorem. 
\hfill{\small$\Box$}
\end{proof}

\subsubsection*{Proof of Theorem~\ref{thrm:conv-phi-|phi|>1}}
Theorem~\ref{thrm:conv-phi-|phi|>1} is a direct consequence of the following lemma about the asymptotic behaviours of $B_n$ and $C_n$.
\begin{lemma}
\begin{equation}\label{lim:Bn}
\lim_{n\to\infty}\phi^{-2n}B_n=\sum_{r=1}^P\big(A_{r+1}^{P}\big)^{-2}\,\frac{(X_0+\zeta)^2}{\phi^2-1}\qquad\mbox{a.s. and in}\,\, L^1(\Prob);
\end{equation}
\begin{equation*}
\lim_{n\to\infty}\phi^{-2n}C_n=0\qquad\mbox{a.s. and in}\,\, L^1(\Prob);
\end{equation*}
and
\begin{equation}\label{lim:Cn}
\lim_{n\to\infty}\mcL\big[\phi^{-n}C_n\big]=\mcL\big[(X_0+\zeta)\zeta^*\big].
\end{equation}
\end{lemma}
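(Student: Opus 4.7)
The plan is to mirror the proofs of Lemmas~\ref{lemm:Bn} and~\ref{lemm:Cn}, now carrying an additional sum over the seasons $r=1,\dots,P$ inside each partial sum and exploiting the facts that the new innovation $V_j^{(r)}$ still has finite second moments bounded uniformly in $j$ and that $\{(V_j^{(1)},\dots,V_j^{(P)}):j\in\mbZ\}$ is strictly stationary. For~(\ref{lim:Bn}) I would reindex $j\to j-1$ so that $B_n=\sum_{r=1}^P\sum_{j=0}^{n-2}(X_{jP+r})^2$. Proposition~\ref{prop:convX-|phi|>1} gives $\phi^{-2j}(X_{jP+r})^2\to(A_1^r)^2(X_0+\zeta)^2$ almost surely and in $L^1(\Prob)$, so the Toeplitz argument already used in the proof of Lemma~\ref{lemm:Bn} yields
$$\phi^{-2n}\sum_{j=0}^{n-2}(X_{jP+r})^2\longrightarrow \frac{(A_1^r)^2(X_0+\zeta)^2}{\phi^2(\phi^2-1)}\qquad\text{a.s.\ and in }L^1(\Prob),$$
and summing over $r$ together with the identity $A_1^r A_{r+1}^P=\phi$ (hence $(A_1^r)^2/\phi^2=(A_{r+1}^P)^{-2}$) gives exactly~(\ref{lim:Bn}).

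For the cross statistic, I would substitute $X_{(j-1)P+r}=A_1^r\phi^{j-1}(X_0+Z_{j-2})+U_{j-1}^{(r)}$ to split
$$C_n=\sum_{r=1}^PA_1^r\sum_{j=1}^{n-1}\phi^{j-1}(X_0+Z_{j-2})V_j^{(r)}+\sum_{r=1}^P\sum_{j=1}^{n-1}U_{j-1}^{(r)}V_j^{(r)}.$$
By Cauchy--Schwarz and the stationarity of $\{(U_{j-1}^{(r)},V_j^{(r)})\}_{j\in\mbZ}$ the second double sum is bounded by $Cn$ in $L^1(\Prob)$, which after division by $\phi^n$ tends to zero both in $L^1(\Prob)$ and (by Borel--Cantelli, thanks to the exponential rate) almost surely. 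To treat the first sum I would reproduce the block decomposition of Lemma~\ref{lemm:Cn} with $n_1=n/4$, $n_2=n/2$: write $X_0+Z_{j-2}=(Z_{j-2}-Z_{n_1})+(X_0+Z_{n_1})$, split the $j$-range at $n_2$, and note that parts~(i)--(iii) of that proof carry over verbatim once $u_{jP+r}$ is replaced by $V_j^{(r)}$. They leave only the block
$$(X_0+Z_{n_1})\sum_{r=1}^PA_1^r\Psi_{n_2}^{n,r},\qquad \Psi_{n_2}^{n,r}\defin\phi^{-n}\sum_{j=n_2}^{n-1}\phi^{j-1}V_j^{(r)}=\sum_{i=1}^{n/2}\phi^{-i-1}V_{n-i}^{(r)}.$$
The $L^1(\Prob)$ and almost-sure convergence of $\phi^{-2n}C_n$ to $0$ follows on applying an extra factor $\phi^{-n}$ to the uniform $L^2(\Prob)$ bound on $\Psi_{n_2}^{n,r}$.

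For the limit in distribution, $X_0+Z_{n_1}$ is measurable with respect to $\mcF^{(n_1+1)P}$ while $(\Psi_{n_2}^{n,r})_{r=1}^P$ lives in $\mcF_{(n_2-1)P+2}$, so the gap $n_2-n_1-1\ge n/4-1$ together with hypothesis~(\textbf{M}) gives the joint asymptotic independence of these two blocks, exactly as in step~(iv) of Lemma~\ref{lemm:Cn}. Expanding $V_{n-i}^{(r)}=\sum_{k=0}^{P-1}A_{r-k+1}^r u_{(n-i)P+r-k}$ and invoking the distributional identity $\mcL[u_0^*,\dots,u_{mP-1}^*]=\mcL[u_{mP},\dots,u_1]$, the time-reversing substitution $j=n-i$ identifies the law of $(\Psi_{n_2}^{n,1},\dots,\Psi_{n_2}^{n,P})$ with a partial sum of an absolutely convergent series in $\{u_k^*\}$, which tends almost surely and in quadratic mean to the corresponding infinite sum as $n\to\infty$. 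Combining this with $X_0+Z_{n_1}\to X_0+\zeta$ and absorbing the factor $A_1^r$ into the double sum over $(r,k)$ yields~(\ref{lim:Cn}) with the $\zeta^*$ announced in Theorem~\ref{thrm:conv-phi-|phi|>1}.

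The main obstacle is this final bookkeeping: the substitution $j\mapsto n-j$ interacts with the inner sum over $k$ inside $V_{n-i}^{(r)}$ and with the prefactor $A_1^r=\phi/A_{r+1}^P$, and these must be reorganised carefully so that the resulting double series exactly matches the coefficients $A_{r+1}^P A_{r-k+1}^r$ and the factor $\phi^{-j}$ that appear in the series defining $\zeta^*$.
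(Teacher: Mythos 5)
Your proposal is correct and follows essentially the paper's own route: the same splitting of $C_n$ into the stationary cross term $\sum_{r,j}U_j^{(r)}V_{j+1}^{(r)}$ plus the dominant term, the same block decomposition with $n_1=n/4$, $n_2=n/2$, the same use of condition (\textbf{M}) to get asymptotic independence of $X_0+Z_{n_1}$ and the tail block $\Psi_{n_2}^{n}$, and the same time-reversal identification via $\{u_k^*\}$. The only (immaterial) variation is for~(\ref{lim:Bn}), where you apply the Toeplitz argument directly to the reindexed double sum instead of first rewriting $B_n$ in terms of the $B_n^{(r)}$ and invoking Lemma~\ref{lemm:Bn}; both reduce to Proposition~\ref{prop:convX-|phi|>1} plus Toeplitz and yield the same constant $\sum_{r=1}^P\big(A_{r+1}^{P}\big)^{-2}/(\phi^2-1)$.
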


\begin{proof}
1) First note that $B_n$ can be expressed as follows
\begin{eqnarray*}
B_n
=\sum_{r=2}^PB_{n-1}^{(r)}+B_{n}^{(P)}-(X_0)^2.
\end{eqnarray*}
Then thanks to Lemma~\ref{lemm:Bn} 
$$\lim_{n\to\infty}\phi^{-2n}B_n= \left(1+\phi^{-2}\sum_{r=2}^P\big(A_1^{r-1}\big)^2\right)\,\frac{(X_0+\zeta)^2}{\phi^2-1}\qquad\mbox{a.s. and in}\,\, L^1(\Prob).$$
Using the fact that $\phi^{-1}A_1^{r-1}=\big(A_r^P\big)^{-1}$, we deduce  limit~(\ref{lim:Bn}).

\bigskip
2) The  convergence almost sure and in $L^1(\Prob)$ of $\phi^{-2n}C_n$ to 0, can be easily obtained following the lines  of the proof of the convergence almost sure and in $L^1(\Prob)$ of $\phi^{-2n}C_n^{(r)}$ in Lemma~\ref{lemm:Cn}. Thus the proof is left to the reader.

\bigskip
3)  From its definition, $C_n$ can be expressed by
\begin{eqnarray}
C_n
=
\sum_{r=1}^PA_1^r\sum_{j=0}^{n-2}\phi^j\big(X_0+Z_{j-1}\big)V_{j+1}^{(r)}+
\sum_{r=1}^P\sum_{j=0}^{n-2}U_j^{(r)}V_{j+1}^{(r)}\label{rel:Cn}.
\end{eqnarray}

\smallskip\noindent
(i) Thanks to the stationarity of the sequences $\big\{\big(U_j^{(1)},\dots,U_j^{(P)}\big):j\in\mbZ\big\}$ and   $\big\{\big(V_j^{(1)},\dots,V_j^{(P)}\big):j\in\mbZ\big\}$, the second term of expression~(\ref{rel:Cn}) is of order of magnitude $n$ in probability. Indeed
$$\Esp\left[\left|\sum_{r=1}^P\sum_{j=0}^{n-2}U_j^{(r)}V_{j+1}^{(r)}\right|\right]
\leq
(n-1)\sum_{r=1}^P\Esp\left[\big(U_0^{(r)}\big)^2\right]^{1/2}\Esp\left[\big(V_{0}^{(r)}\big)^2\right]^{1/2}.$$

\smallskip\noindent
(ii) Following the same lines as in the proof of Lemma~\ref{lemm:Cn}, and using the stationarity of $\big\{\big(V_j^{(1)},\dots,V_j^{(P)}\big):j\in\mbZ\big\}$ we can readily prove that
$$\phi^{-n}\sum_{r=1}^PA_1^r\sum_{j=0}^{n-2}\phi^j\big(X_0+Z_{j-1}\big)V_{j+1}^{(r)}=
\big(X_0+Z_{n_1}\big)\Psi_{n_2}^n
+o_{\Prob}(1)$$
where $n_1=n_2/2=n/4$ and
\begin{equation}\label{Psi}
\Psi_{n_2}^n\defin\sum_{r=1}^PA_1^r\sum_{j=n_2}^{n-2}\phi^{j-n}V_{j+1}^{(r)}=\sum_{r=1}^PA_1^r\sum_{j=2}^{n-n_2}\phi^{-j}V_{n-j+1}^{(r)}.
\end{equation}
The mixing hypothesis~{\rm(\textbf{M})} entails that $X_0+Z_{n_1}$ and $\Psi_{n_2}^n$ are asymptotically independent. Besides the distribution of
$$\Psi_{n_2}^n =\sum_{r=1}^P\sum_{k=0}^{P-1}A_1^rA_{r-k+1}^r\sum_{j=2}^{n-n_2}\phi^{-j}u_{(n-j+1)P+r-k}.$$
coincides with the distribution of $\Psi_{n-n_2}^*$
where 
$$\Psi_{n}^*\defin \sum_{r=1}^P\sum_{k=0}^{P-1}A_1^rA_{r-k+1}^r\sum_{j=2}^{n}\phi^{-j}u_{jP-r+k}^*$$
and the sequence $\{u_k^*\}$ is defined in part 2(v) of the proof of Lemma~\ref{lemm:Cn}. The sequence  $\{u_k^*\}$ is independent with respect to $X_0$ and $\{u_k\}$, thus the sequence $\{\Psi_n\}$ is also independent with respect to $X_0$ and $\{u_k\}$. Since the sequence $\{u_k^*\}$ is centered periodically distributed with second order moments, the sequence $\{\Psi_{n}^*\}$  converges almost surely and in quadratic mean to some random variable $\zeta^*$. Thanks to the definition~(\ref{Psi}) of $\Psi_{n_2}^n$  and the periodicity of the distribution of the innovation $\{u_k\}$, we deduce the distribution of $\zeta^*$.  Then limit~(\ref{lim:Cn}) is proved.
\hfill{\small $\Box$} 
\end{proof}


%
\end{document}